\documentclass [12pt, reqno]{amsart}
\usepackage{times}
\usepackage{amssymb,latexsym,amsmath,amsfonts, eucal}
\usepackage[usenames,dvipsnames,svgnames,table]{xcolor}

\topmargin .57in \voffset = -40pt \hoffset = -44pt \textwidth =
14cm \textheight = 48\baselineskip

\newtheorem{thm}{Theorem}[section]
      \newtheorem{lemma}[thm]{Lemma}
      \newtheorem{prop}[thm]{Proposition}

      \newtheorem{rmk}[thm]{Remark}

      \numberwithin{equation}{section}

\title [Analytic sampling theory]{Analytic Kramer sampling and quasi Lagrange-type interpolation in vector valued RKHS}
\author[ Mahapatra]{Subhankar Mahapatra}

\address{
	Department of Mathematics\\
	Indian Institute of Technology Ropar\\
	140001\\
	India}

\email{ {subhankar.19maz0001@iitrpr.ac.in, subhankarmahapatra95@gmail.com}}

\author[Sarkar]{Santanu Sarkar}

\address{
	Department of Mathematics\\
	Indian Institute of Technology Ropar\\
	140001\\
	India}

\email{ {santanu@iitrpr.ac.in, santanu87@gmail.com.}}

\begin{document}

\subjclass{ Primary: 46E22, 46E40; Secondary: 47B32, 94A20. }

\keywords{Vector valued de Branges spaces, Abstract analytic Kramer sampling, Quasi Lagrange-type interpolation, Generalized backward shift operator, Multiplication operator.}


\begin{abstract}
\noindent This paper discusses an abstract Kramer sampling theorem for functions within a reproducing kernel Hilbert space (RKHS) of vector valued holomorphic functions. Additionally, we extend the concept of quasi Lagrange-type interpolation for functions within a RKHS of vector valued entire functions. The dependence of having quasi Lagrange-type interpolation on an invariance condition under the generalized backward shift operator has also been discussed. Furthermore, the paper establishes the connection between quasi Lagrange-type interpolation, operator of multiplication by the independent variable, and de Branges spaces of vector valued entire functions.
\end{abstract}

\maketitle
\section{Introduction}
\label{Intro}
The Kramer sampling theorem, which is the generalization of the well-known sampling result due to Whittaker \cite{Whittaker}, has played a significant role for the development of sampling and interpolation theory, signal analysis, and in general function theory of mathematical analysis. Suppose $I=[a,b]\subseteq\mathbb{R}$ be any closed, bounded interval and the kernel function $K(x,\mu)$ is continuous as a function of real variable $\mu$ and belongs to $L^2(I)$ for every fixed $\mu$. Now, if there exists a sequence of sampling points $\{\mu_n\}_{n\in\mathbb{Z}}$ such that $\{K(x,\mu_n)\}_{n\in\mathbb{Z}}$ is a complete orthogonal set in $L^2(I)$, then the Kramer sampling theorem \cite{Kramer} says, if  
$$f(\mu)=\int_a^b F(x)K(x,\mu)~dx$$
for some $F\in L^2[a,b]$, then 
$$f(\mu)=\sum_{n=-\infty}^\infty f(\mu_n)F_n(\mu),$$
where the sampling functions are given by
 $$F_n(\mu)=\frac{\int_a^b K(x,\mu)\overline{K(x,\mu_n)}~dx}{\int_a^b|K(x,\mu)|^2~dx}.$$
A good amount of examples of the Kramer sampling theorem can be found in connection with the self adjoint boundary value problems. In particular,  sampling associated with Sturm-Liouville problems can be found in \cite{Annaby}, \cite{Zayed1}. The extension of this Kramer sampling theorem has been done in several ways. One interesting approach is extending this theorem to holomorphic functions associated with holomorphic kernel functions. In this direction, Kramer sampling theorem for scalar valued holomorphic functions associated with scalar valued holomorphic kernel functions has been studied in \cite{Everitt}. An abstract version of Kramer sampling theorem in the context of reproducing kernel Hilbert spaces (RKHS) has been discussed in \cite{Garcia1}, and the case when the associated sampling functions $F_n$ can be written as a quasi Lagrange-type interpolation function
$$F_n(z)=\frac{H(z)}{H(z_n)}\frac{Q(z)}{(z-z_n)Q'(z_n)},$$
where $Q(z)$ is a scalar valued entire function with only simple zeros at $z_n$ and $H(z)$ is a scalar valued entire function having no zeros, considered in \cite{Garcia2}. Also, a connection between de Branges spaces of scalar valued entire functions and having quasi Lagrange-type interpolation representation of the sampling functions has been shown in \cite{Garcia2}, \cite{Garcia3}. \\
Our main goal is to introduce an abstract version of the Kramer sampling theorem for functions in a RKHS of $\mathfrak{X}$-valued holomorphic functions, where $\mathfrak{X}$ is a complex separable Hilbert space. Also, to find a vector analog of quasi Lagrange-type interpolation functions and its correlation with the de Branges spaces of $\mathfrak{X}$-valued entire functions that we introduced in our previous work \cite{Mahapatra}. In this direction, sampling and interpolation of functions in $\mathbb{C}^n$-valued Paley-Wiener spaces \cite{Avdonin}, Lagrange-type interpolation for Hilbert space valued \cite{Zayed}, and Banach space valued \cite{Han} functions are worth noting.\\
In this quest, our motivation comes from one work of Zayed \cite{Zayed}, where he talked about the sampling theorem of $\mathfrak{X}$-valued functions. Also, it is important to note that a similar approach for scalar valued functions can be found in \cite{Zayed2}. Let $\{z_n\}_{n=1}^\infty\subseteq\mathbb{C}$ be such that $|z_n|\to\infty$ as $n\to\infty$ and $\{u_n\}_{n=1}^\infty$ be an orthonormal basis of $\mathfrak{X}$. Suppose $Q(z)$ is a scalar valued entire function having only simple zeros at $\{z_n\}_{n=1}^\infty$, then for each $z\in\mathbb{C}$ we define the following operator on $\mathfrak{X}$:
\begin{equation}
\label{Zayed op}
F(z) = \left\{
    \begin{array}{ll}
         Q(z)\frac{\langle\cdot,u_n\rangle_\mathfrak{X}}{z-z_n}u_n  & \mbox{if } z \neq z_n \\
         Q'(z_n)\langle\cdot,u_n\rangle_\mathfrak{X}u_n & \mbox{if } z = z_n.
    \end{array} \right.
\end{equation}
 Thus, for every $u\in\mathfrak{X}$, $f_u(z)=F(z)u$ is a function from $\mathbb{C}$ to $\mathfrak{X}$. Now, in this context, we mention the sampling result due to Zayed \cite{Zayed} in the following theorem:
 \begin{thm}
 Suppose $F(z)$ is the linear operator on $\mathfrak{X}$ for all $z\in\mathbb{C}$ as defined in $(\ref{Zayed op})$, then the following implications hold:
 \begin{enumerate}
 \item $F(z)$ is a bounded linear operator on $\mathfrak{X}$, for every $z\in\mathbb{C}$ and $||F(\cdot)||$ is uniformly bounded on every compact subset of $\mathbb{C}$.
 \item $f_u$ is an $\mathfrak{X}$-valued entire function for all $u\in\mathfrak{X}$ and can be recovered form its values $\{f_u(z_n)\}_{n=1}^\infty$ by the following Lagrange-type interpolation formula:
 $$f_u(z)=\sum_{n=1}^\infty \frac{Q(z)}{(z-z_n)Q'(z_n)}f_u(z_n),\hspace{.4cm} z\in\mathbb{C}.$$
 \end{enumerate}
 \end{thm}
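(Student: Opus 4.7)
The plan is to first recast $F(z)$ as a diagonal-in-basis operator. Writing $L_n(z):=Q(z)/(z-z_n)$ and extending by $L_n(z_n):=Q'(z_n)$, each $L_n$ becomes an entire scalar function because $z_n$ is a simple zero of $Q$. I would then interpret the piecewise definition of $F(z)$ as the series
$$F(z)v \;=\; \sum_{n=1}^\infty L_n(z)\,\langle v, u_n\rangle_{\mathfrak{X}}\, u_n,$$
which is automatically consistent with the $z=z_k$ branch: since $L_n(z_k)=Q(z_k)/(z_k-z_n)=0$ for $n\neq k$, only the $n=k$ term survives and contributes $Q'(z_k)\langle v, u_k\rangle u_k$, as prescribed.

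For part (1), I would use Parseval to estimate
$$\|F(z)v\|_{\mathfrak{X}}^2 \;=\; \sum_{n=1}^\infty |L_n(z)|^2\,|\langle v, u_n\rangle|^2 \;\leq\; \Bigl(\sup_n |L_n(z)|\Bigr)^2 \|v\|^2,$$
reducing everything to showing that $M(K):=\sup_{z\in K,\,n\in\mathbb{N}}|L_n(z)|<\infty$ for every compact $K\subset\mathbb{C}$. Split the index set: since $|z_n|\to\infty$, only finitely many $z_n$ lie near $K$, and for those finitely many $n$ the entire function $L_n$ is obviously bounded on $K$. For the remaining tail $\mathrm{dist}(z_n,K)\to\infty$, giving $|L_n(z)|\leq\max_{K}|Q|/\mathrm{dist}(z_n,K)\to 0$ uniformly in $z\in K$. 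Combining yields $\|F(z)\|\leq M(K)<\infty$, which is the uniform bound on $K$.

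For part (2), I would prove entireness of $f_u$ by showing uniform-on-compacta convergence of the $\mathfrak{X}$-valued partial sums $S_N(z):=\sum_{n=1}^N L_n(z)\langle u, u_n\rangle u_n$. Each $S_N$ is $\mathfrak{X}$-valued entire, and the tail bound from step (1) gives
$$\|f_u(z)-S_N(z)\|_{\mathfrak{X}}^2 \;\leq\; M(K)^2\sum_{n>N}|\langle u, u_n\rangle|^2,$$
which tends to zero uniformly in $z\in K$ as $N\to\infty$ because $\sum_n|\langle u,u_n\rangle|^2=\|u\|^2<\infty$. So $f_u$ is entire. The sampling identity is then immediate: evaluating $f_u(z_n)=F(z_n)u=Q'(z_n)\langle u, u_n\rangle u_n$ and substituting gives
$$\sum_n \frac{Q(z)}{(z-z_n)Q'(z_n)} f_u(z_n) \;=\; \sum_n L_n(z)\langle u, u_n\rangle u_n \;=\; f_u(z).$$

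The one genuinely nontrivial point is the uniform bound $M(K)<\infty$; in particular, one must check that the presence of finitely many $z_k$ inside $K$ does not cause $|L_k(z)|$ to blow up there. This is precisely what the entire extension $L_k(z_k):=Q'(z_k)$ provides, and the tail decay from $|z_n|\to\infty$ then closes the argument. Everything else is a routine Hilbert-space calculation.
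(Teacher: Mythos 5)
Your argument is correct and complete. Note that the paper itself does not prove this theorem: it is quoted in the introduction as a known result of Zayed (\emph{Sampling in Hilbert space}, Proc.\ Amer.\ Math.\ Soc.\ 124 (1996)), so there is no in-paper proof to compare against. What you have written is the standard self-contained argument: you correctly read the piecewise definition of $F(z)$ as the diagonal operator $F(z)v=\sum_n L_n(z)\langle v,u_n\rangle u_n$ with $L_n(z)=Q(z)/(z-z_n)$ extended entirely across the simple zero $z_n$ by $L_n(z_n)=Q'(z_n)$; Parseval reduces both the operator bound and the tail estimate to $\sup_{z\in K,\,n}|L_n(z)|<\infty$, which you get by separating the finitely many $z_n$ near $K$ (where the entire extension controls $L_n$) from the tail (where $|z_n|\to\infty$ forces $\mathrm{dist}(z_n,K)\to\infty$ and hence $|L_n|\le\max_K|Q|/\mathrm{dist}(z_n,K)\to 0$). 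The only step you leave implicit is that a uniform-on-compacta limit of $\mathfrak{X}$-valued entire functions is entire, which is standard (weak holomorphy plus local boundedness, or Morera applied to $\langle f_u(\cdot),w\rangle$); everything else, including the evaluation $f_u(z_k)=Q'(z_k)\langle u,u_k\rangle u_k$ that turns the orthonormal expansion into the Lagrange-type formula, is exactly as it should be.
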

In this article, our approach is to consider an arbitrary function $F(z)$ of bounded linear operators on $\mathfrak{X}$ and consider a RKHS $\mathcal{H}_F$ such that the elements are of the form $F(z)u, u\in\mathfrak{X}$. Then to study analogous Kramer sampling representation and quasi Lagrange-type interpolation representation for functions in $\mathcal{H}_F$.\\
Throughout this article, we consider $\mathfrak{X}$ as a complex separable Hilbert space and $B(\mathfrak{X})$ as the collection of all  bounded linear operators on $\mathfrak{X}$. Suppose $F:\Omega\subseteq \mathbb{C}\to B(\mathfrak{X})$ be any function. We recall the construction of a RKHS $\mathcal{H}_F$ of $\mathfrak{X}$-valued functions related to $F$ in section \ref{Prele}. Also, in this section, we recapture some preliminary results regarding the operator of multiplication by the independent variable $\mathfrak{T}$ and the generalized backward shift operator $R_z$ in the context of RKHS of $\mathfrak{X}$-valued entire functions. Section \ref{Kramer thm} considers $\mathcal{H}_F$ the RKHS of $\mathfrak{X}$-valued holomorphic functions on $\Omega$ and discusses an arbitrary Kramer sampling representation for functions $f\in\mathcal{H}_F$. A vector version of quasi Lagrange-type interpolation of sampling functions has been discussed in section \ref{Lagrange interpolation}. The connection between quasi Lagrange-type interpolation and the generalized backward shift operator is also shown in this section. In the last section, a connection between quasi Lagrange-type interpolation and de Branges spaces of vector valued entire functions is discussed.\\
The following notations, along with the standard symbols and the symbols already been introduced, will be used in this paper: \\
$\mathbb{C}_+$ denotes the complex upper half-plane and $\rho_\gamma(z)=-2\pi i(z-\overline{\gamma})$. For any operator $T$, $\ker T$ denotes the kernel of $T$ and $\mbox{rng}~T$ denotes the range of $T$. Also, $T^*$ denotes the adjoint operator for $T$ and $\sigma(T)$ denotes the spectrum of $T$. Any complex number $\gamma\in\mathbb{C}$ is said to be a point of regular type of $T$ if there exists $C_\gamma>0$ such that 
$$||(T-\gamma I)f||\geq C_\gamma ~||f||,$$
for all $f\in\mathcal{D}(T)$, domain of $T$. An operator $T$ is called regular if every number $\gamma\in\mathbb{C}$ is a point of regular type of $T$.\\
An operator $T\in B(\mathfrak{X})$ is said to be a Fredholm operator if the dimensions of $\ker T$ and $\ker T^*$ are both finite. The collection of all Fredholm operators on $\mathfrak{X}$ is denoted as $\Phi(\mathfrak{X})$.
\section{RKHS based on operator valued functions}
\label{Prele}
In this section, we recall the construction of a RKHS based on a $B(\mathfrak{X})$-valued function. Also, we mention some basic results about the operator of multiplication by the independent variable (multiplication operator later on) $\mathfrak{T}$ and the generalized backward shift operator $R_z$. A detailed study about the reproducing kernel Hilbert spaces can be found in \cite{Paulsen}. Assume that $F$ is any $B(\mathfrak{X})$-valued function on $\Omega\subseteq\mathbb{C}$, i.e., $F(z)\in B(\mathfrak{X})$ for all $z\in\Omega$ and $\mathcal{F}(\Omega,\mathfrak{X})$ is the collection of all functions from $\Omega$ to $\mathfrak{X}$. Now, let us define  a mapping $L:\mathfrak{X}\to \mathcal{F}(\Omega,\mathfrak{X})$ defined by $L(u)=f_u$, where
\begin{equation}
\label{1}
f_u(z)=F(z)u,\hspace{.3cm}\mbox{for all}~z\in\Omega~\mbox{and}~u\in\mathfrak{X}.
\end{equation}
It is clear that the mapping $L$ is linear and denote $\mathcal{H}_F=L(\mathfrak{X})$ ($\mathcal{H}$ when there is no confusion about the involvement of $F$). Now, we show that $\mathcal{H}_F$ can be endowed with an inner product such that it will become a RKHS. Consider
$$H:=\{u\in\mathfrak{X}:L(u)=0\}=\cap_{z\in\Omega}\ker F(z).$$
Since $H$ is a closed subspace of $\mathfrak{X}$ the quotient space $\mathfrak{X}/H$ is a Banach space corresponding to the norm
$$||\overline{u}||_{\mathfrak{X}/H}:=\inf\{||u+h||_\mathfrak{X}:h\in H\},$$
where $\overline{u}=\{u+h:h\in H\}$ is the coset of $u\in\mathfrak{X}$. Now, we define the norm in $\mathcal{H}_F$ by
$$||f_u||_{\mathcal{H}_F}:=||\overline{u}||_{\mathfrak{X}/H}=\inf\{||u+h||_\mathfrak{X}:h\in H\}=\inf\{||u||_\mathfrak{X}:f_u=L(u)\}.$$
It can be easily shown that the above infimum is indeed attained, i.e., for $L(u)=f_u\in\mathcal{H}_F$ there exists $\tilde{u}\in H^\perp$ such that 
$$||\overline{u}||_{\mathfrak{X}/H}=||\tilde{u}||_\mathfrak{X}=||f_u||_{\mathcal{H}_F}.$$

\begin{lemma}
Let $f_u,f_v\in\mathcal{H}_F$ corresponding to $u,v\in\mathfrak{X}$ such that $||f_u||_{\mathcal{H}_F}=||\tilde{u}||_\mathfrak{X}$ and $||f_v||_{\mathcal{H}_F}=||\tilde{v}||_\mathfrak{X}$. Then
\begin{enumerate}
\item $||f_u+f_v||_{\mathcal{H}_F}=||\tilde{u}+\tilde{v}||_\mathfrak{X}$ and $||f_u-f_v||_{\mathcal{H}_F}=||\tilde{u}-\tilde{v}||_\mathfrak{X}$.
\item $||f_u+if_v||_{\mathcal{H}_F}=||\tilde{u}+i\tilde{v}||_\mathfrak{X}$ and $||f_u-if_v||_{\mathcal{H}_F}=||\tilde{u}-i\tilde{v}||_\mathfrak{X}$.
\end{enumerate}
\end{lemma}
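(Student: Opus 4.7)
The strategy is to exploit the fact that the minimizer in the definition of $\|\cdot\|_{\mathcal H_F}$ is attained at a \emph{unique} point of $H^\perp$, and then use linearity of $L$ together with the subspace property of $H^\perp$ to identify that unique point explicitly for the sums/differences $f_u \pm f_v$ and $f_u \pm i f_v$.

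First I would establish a uniqueness lemma: for every $f\in\mathcal H_F$ there is exactly one $\tilde w\in H^\perp$ with $L(\tilde w)=f$. Indeed, if $\tilde w_1,\tilde w_2\in H^\perp$ both satisfy $L(\tilde w_j)=f$, then $L(\tilde w_1-\tilde w_2)=0$ forces $\tilde w_1-\tilde w_2\in H\cap H^\perp=\{0\}$. Combining this with the already-stated fact that the infimum is attained by some element of $H^\perp$, the representative $\tilde u$ (resp. $\tilde v$) is precisely the orthogonal projection of $u$ (resp. $v$) onto $H^\perp$, and $\|f_u\|_{\mathcal H_F}=\|\tilde u\|_{\mathfrak X}$ (resp. $\|f_v\|_{\mathcal H_F}=\|\tilde v\|_{\mathfrak X}$) is attained only there.

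Next I would carry out the identification. Since $H^\perp$ is a complex linear subspace of $\mathfrak X$, the elements $\tilde u+\tilde v$, $\tilde u-\tilde v$, $\tilde u+i\tilde v$, $\tilde u-i\tilde v$ all lie in $H^\perp$. By linearity of $L$,
\[
L(\tilde u+\tilde v)=L(\tilde u)+L(\tilde v)=f_u+f_v,
\]
and similarly for the other three combinations. Hence each of these is the unique element of $H^\perp$ mapping under $L$ to the corresponding function, and the norm identity
\[
\|f_u+f_v\|_{\mathcal H_F}=\|\tilde u+\tilde v\|_{\mathfrak X}
\]
follows immediately from the uniqueness step, and likewise for $f_u-f_v$, $f_u\pm i f_v$, giving both assertions (1) and (2).

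The only delicate point is the uniqueness of the minimizer in $H^\perp$; once that is in hand, the rest is a one-line application of linearity of $L$. Since this uniqueness is an immediate consequence of $H\cap H^\perp=\{0\}$, I do not anticipate any real obstacle, and the argument is essentially routine Hilbert-space bookkeeping.
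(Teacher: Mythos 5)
Your proof is correct. The paper states this lemma without any proof (it is treated as routine, following the remark that the infimum is attained in $H^\perp$), and your argument --- that the representative $\tilde u$ is the unique element of $H^\perp$ in the coset of $u$ (equivalently, the orthogonal projection $P_{H^\perp}u$, by $H\cap H^\perp=\{0\}$), so that by linearity of $L$ and of the projection the representative of $f_u\pm f_v$ and $f_u\pm i f_v$ is $\tilde u\pm\tilde v$ and $\tilde u\pm i\tilde v$ --- is precisely the intended verification.
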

Now, by using the above lemma and the polarization identity we can define the following inner product on $\mathcal{H}_F$ by
\begin{equation}
\label{Inner product}
\langle f_u,f_v\rangle_{\mathcal{H}_F}:=\langle \tilde{u},\tilde{v}\rangle_\mathfrak{X}, \mbox{where}~||f_u||_{\mathcal{H}_F}=||\tilde{u}||_\mathfrak{X}~\mbox{and}~||f_v||_{\mathcal{H}_F}=||\tilde{v}||_\mathfrak{X}.
\end{equation}
Thus the linear map $L:H^\perp\to\mathcal{H}_F$ is a bijective isometry, i.e., a unitary operator. Hence $\mathcal{H}_F$ is a Hilbert space.
\begin{prop}
\label{2}
Let $F$ be any $B(\mathfrak{X})$-valued function on $\Omega$ and $L$ is the linear map as defined in (\ref{1}). Then the following assertions are equivalent:
\begin{enumerate}
\item $L$ is an isometry.
\item $L$ is one-one.
\item $\cap_{z\in\Omega}\ker F(z)=\{0\}$.
\item $\cup_{z\in\Omega}~\mbox{rng}~F(z)^*$ is complete in $\mathfrak{X}$.
\end{enumerate}
\end{prop}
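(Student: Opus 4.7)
The plan is to prove the cycle $(1) \Rightarrow (2) \Rightarrow (3) \Rightarrow (1)$ together with the equivalence $(3) \Leftrightarrow (4)$, relying only on the construction of $\mathcal{H}_F$ carried out above and on standard Hilbert-space duality. The key observation driving everything is that the kernel of $L$ is precisely $H = \cap_{z\in\Omega}\ker F(z)$; once this is in hand, the chain of implications essentially writes itself.

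First, $(1) \Rightarrow (2)$ is automatic, since any isometry has trivial kernel. For $(2) \Rightarrow (3)$, I would unpack the definition of $L$: $L(u) = 0$ if and only if $F(z)u = 0$ for every $z \in \Omega$, so $\ker L = H$, and injectivity of $L$ forces $H = \{0\}$. For $(3) \Rightarrow (1)$, the norm formula
$$\|f_u\|_{\mathcal{H}_F} = \inf\{\|u+h\|_\mathfrak{X} : h \in H\}$$
collapses to $\|u\|_\mathfrak{X}$ as soon as $H = \{0\}$, so $L$ becomes norm-preserving and hence an isometry.

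The equivalence $(3) \Leftrightarrow (4)$ is where I would invoke the standard identity $\ker T = (\mathrm{rng}\, T^*)^\perp$, valid for every $T \in B(\mathfrak{X})$. Applying it at each $z$ and taking intersections gives
$$\bigcap_{z\in\Omega}\ker F(z) \;=\; \bigcap_{z\in\Omega}(\mathrm{rng}\, F(z)^*)^\perp \;=\; \Bigl(\bigcup_{z\in\Omega}\mathrm{rng}\, F(z)^*\Bigr)^{\perp},$$
and this orthogonal complement is trivial if and only if $\bigcup_{z\in\Omega}\mathrm{rng}\, F(z)^*$ has dense linear span in $\mathfrak{X}$, which is precisely what is meant by completeness in $\mathfrak{X}$.

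There is no genuinely hard step in this argument; the subtlety, such as it is, amounts to bookkeeping. The only point that might trip one up is ensuring that the pointwise duality between kernels and ranges survives intersection over the (possibly uncountable) index set $\Omega$, which it does because taking orthogonal complements converts intersections into closed linear spans of unions. Everything else is a direct reading of the quotient-space construction that equips $\mathcal{H}_F$ with its norm.
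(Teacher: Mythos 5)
Your proposal is correct and follows essentially the same route as the paper: the core facts in both are that $\ker L=\cap_{z\in\Omega}\ker F(z)$ and the duality $\langle F(z)u,v\rangle_{\mathfrak X}=\langle u,F(z)^*v\rangle_{\mathfrak X}$ (which you package as $\ker T=(\mathrm{rng}\,T^*)^\perp$), with only the ordering of the implications and the explicitness of the norm computation for $(3)\Rightarrow(1)$ differing from the paper's $(1)\Leftrightarrow(2)$, $(2)\Rightarrow(3)\Rightarrow(4)\Rightarrow(2)$ scheme. No gaps.
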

\begin{proof}
$(1)\iff(2)$ is straight forward. Now, suppose $L$ is one-one, then $f_u=0$ implies $u=0$. Since for any $v\in \cap_{z\in\Omega}\ker F(z)$, $f_v=0$, $v$ must be zero vector. This gives $(2)\Rightarrow (3)$. Suppose $u\in\mathfrak{X}$ is such that $\langle u, F(z)^*v\rangle_\mathfrak{X}=0$ for all $z\in\Omega$ and $v\in\mathfrak{X}$. This implies $u\in \cap_{z\in\Omega}\ker F(z)$. Thus $(3)\Rightarrow (4)$. Suppose for some $u\in\mathfrak{X}$, $f_u=0$. This implies $F(z)u=0$ for all $z\in\Omega$. Thus for all $z\in\Omega$ and $v\in\mathfrak{X}$ we have $0=\langle F(z)u,v\rangle_\mathfrak{X}=\langle u,F(z)^*v\rangle_\mathfrak{X}$, consequently, $(4)\Rightarrow (2)$.
\end{proof}
In particular, if there exists a sequence $\{z_n\}_{n=1}^\infty$ in $\Omega$ such that $\cup_{n=1}^\infty \mbox{rng}~F(z_n)^*$ is complete in $\mathfrak{X}$, then also $L$ is an one-one linear map. Suppose for any $u\in\mathfrak{X}$, $L(u)=f_u$ and $||f_u||_{\mathcal{H}_F}=||\tilde{u}||_\mathfrak{X}$. Then for any $z\in\Omega$,
$$||f_u(z)||_\mathfrak{X}=||f_{\tilde{u}}(z)||_\mathfrak{X}=||F(z)\tilde{u}||_\mathfrak{X}\leq ||F(z)||~||\tilde{u}||_\mathfrak{X}=||F(z)||~||f_u||_{\mathcal{H}_F}.$$
This implies that the point evaluation linear maps are bounded in $\mathcal{H}_F$ for all $z\in\Omega$. Thus $\mathcal{H}_F$ is a RKHS of $\mathfrak{X}$-valued functions on $\Omega$. The reproducing kernel of $\mathcal{H}_F$ is denoted as $K$ and is given by $K_\gamma(z)=F(z)F(\gamma)^*$ for all $z, \gamma\in\Omega$. In fact, 
\begin{enumerate}
\item For any $u\in\mathfrak{X}$ and $\gamma\in\Omega$, $K_\gamma u\in\mathcal{H}_F$ as $L(F(\gamma)^*u)=K_\gamma u$.
\item For every $f=f_u\in\mathcal{H}_F$ with $||f||_{\mathcal{H}_F}=||\tilde{u}||_\mathfrak{X}$, $\gamma\in\Omega$ and $v\in\mathfrak{X}$,
\begin{align*}
\langle f,K_\gamma v\rangle_{\mathcal{H}_F}
&=\langle \tilde{u},F(\gamma)^*v \rangle_\mathfrak{X}\\
&=\langle F(\gamma)\tilde{u},v\rangle_\mathfrak{X}\\
&=\langle f(\gamma),v\rangle_\mathfrak{X}.
\end{align*}
\end{enumerate}
In the rest of this section, we recall some results regarding the multiplication operator $\mathfrak{T}$ and the generalized backward shift operator $R_z$ on a RKHS of vector valued entire functions. Suppose $\mathcal{H}$ is a RKHS of $\mathfrak{X}$-valued entire function, and $K_\gamma(z)$ is the corresponding $B(\mathfrak{X})$-valued RK. Then for any $\beta \in\mathbb{C}$, the set $\mathcal{H}_\beta=\{f\in\mathcal{H}:f(\beta)=0\}$ is a closed subspace of $\mathcal{H}$, and the multiplication operator $\mathfrak{T}$ is a closed operator with domain $\mathcal{D}\subseteq\mathcal{H}$.
\begin{lemma}
\label{Regular}
Suppose $\mathcal{H}$ is a nonzero RKHS of $\mathfrak{X}$-valued entire functions, and $K_\gamma(z)$ is the corresponding RK. Then for any $\beta\in\mathbb{C}$,
\begin{equation}
\label{domain condition}
R_\beta\mathcal{H}_\beta\subseteq\mathcal{H}\hspace{.2cm}\mbox{if and only if}\hspace{.2cm}R_\beta\mathcal{H}_\beta=\mathcal{D}.
\end{equation}
Moreover, if the condition $(\ref{domain condition})$ holds for some $\beta\in\mathbb{C}$, then the following implications have:
\begin{enumerate}
\item $R_\beta$ is a bounded linear operator from $\mathcal{H}_\beta$ to $\mathcal{H}$.
\item $\mbox{rng}~(\mathfrak{T}-\beta I)=\mathcal{H}_\beta$.
\item $\beta$ is a point of regular type for $\mathfrak{T}$.
\end{enumerate}
\end{lemma}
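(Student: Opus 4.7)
The core observation is that $R_\beta$ and $\mathfrak{T}-\beta I$ behave like mutual inverses between $\mathcal{H}_\beta$ and $\mathcal{D}$, because for an entire $\mathfrak{X}$-valued function $f$ with $f(\beta)=0$, one has the identity $(z-\beta)(R_\beta f)(z)=f(z)$. I would organize the whole proof around this single identity.

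\medskip

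\noindent\textbf{Step 1 (the iff).} First I would show the forward direction. Assume $R_\beta\mathcal{H}_\beta\subseteq\mathcal{H}$. For $f\in\mathcal{H}_\beta$, the element $g:=R_\beta f$ lies in $\mathcal{H}$; since $(\mathfrak{T}-\beta I)g=(z-\beta)g(z)=f(z)\in\mathcal{H}$, we conclude $g\in\mathcal{D}$, whence $R_\beta\mathcal{H}_\beta\subseteq\mathcal{D}$. For the reverse containment, take any $g\in\mathcal{D}$; then $(z-\beta)g(z)\in\mathcal{H}$, and it vanishes at $\beta$, so $f:=(\mathfrak{T}-\beta I)g\in\mathcal{H}_\beta$, and $R_\beta f=g$. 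The other direction of the iff is trivial since $\mathcal{D}\subseteq\mathcal{H}$. This same argument simultaneously gives assertion (2), $\operatorname{rng}(\mathfrak{T}-\beta I)=\mathcal{H}_\beta$.

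\medskip

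\noindent\textbf{Step 2 (boundedness of $R_\beta$).} Since $\mathcal{H}$ is an RKHS, point evaluation at $\beta$ is continuous, so $\mathcal{H}_\beta=\ker\operatorname{ev}_\beta$ is a closed subspace of $\mathcal{H}$ and hence Hilbert. The operator $R_\beta:\mathcal{H}_\beta\to\mathcal{H}$ is everywhere defined by the hypothesis, so by the closed graph theorem it suffices to verify closedness. Suppose $f_n\to f$ in $\mathcal{H}_\beta$ and $R_\beta f_n\to g$ in $\mathcal{H}$. Norm convergence in an RKHS implies pointwise convergence in $\mathfrak{X}$, so for every $z\neq\beta$
\begin{equation*}
g(z)=\lim_{n\to\infty}(R_\beta f_n)(z)=\lim_{n\to\infty}\frac{f_n(z)}{z-\beta}=\frac{f(z)}{z-\beta}=(R_\beta f)(z).
\end{equation*}
Since both $g$ and $R_\beta f$ are entire (the latter is entire because $f$ is entire and vanishes at $\beta$), they agree everywhere, so $g=R_\beta f$. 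This establishes (1).

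\medskip

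\noindent\textbf{Step 3 (regular type).} Using Step 1, for every $g\in\mathcal{D}$ we may write $g=R_\beta f$ with $f=(\mathfrak{T}-\beta I)g\in\mathcal{H}_\beta$. The boundedness from Step 2 then yields
\begin{equation*}
\|g\|_{\mathcal{H}}=\|R_\beta f\|_{\mathcal{H}}\le\|R_\beta\|\,\|f\|_{\mathcal{H}}=\|R_\beta\|\,\|(\mathfrak{T}-\beta I)g\|_{\mathcal{H}},
\end{equation*}
so with $C_\beta:=\|R_\beta\|^{-1}$ (well-defined since $\mathcal{H}$ is nonzero, forcing $R_\beta\neq 0$ when $\mathcal{H}_\beta\neq\{0\}$; the case $\mathcal{H}_\beta=\{0\}$ makes $\mathfrak{T}-\beta I$ trivially bounded below on $\mathcal{D}=\{0\}$), we obtain $\|(\mathfrak{T}-\beta I)g\|\ge C_\beta\|g\|$, giving (3).

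\medskip

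I do not anticipate a genuine obstacle; the only delicate point is the closed-graph argument in Step 2, which hinges on the fact that RKHS-norm convergence forces pointwise convergence, and on the entirety of limits inside $\mathcal{H}$. Everything else is essentially bookkeeping around the inverse relationship between $R_\beta$ and $\mathfrak{T}-\beta I$ identified in Step 1.
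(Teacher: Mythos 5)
Your proof is correct and takes essentially the same route as the paper: part (3) is proved in the paper exactly as in your Step 3, namely from the boundedness of $R_\beta$ together with the identity $R_\beta(\mathfrak{T}-\beta I)f=f$ for all $f\in\mathcal{D}$. For the equivalence and items (1)--(2) the paper merely cites Lemmas 7.1 and 7.2 of its reference on vector valued de Branges spaces, and your Steps 1--2 (the mutual-inverse relationship between $R_\beta$ and $\mathfrak{T}-\beta I$, plus the closed graph theorem using that norm convergence in an RKHS implies pointwise convergence) supply precisely the standard argument being invoked there, so there is no gap.
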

\begin{proof}
The proof of this lemma except $(3)$ follows from Lemma $7.1$ and Lemma $7.2$ in \cite{Mahapatra}. Also, the proof of $(3)$ can be done by using $(1)$ and the following observation:
$$R_\beta(\mathfrak{T}-\beta I)f=f\hspace{.4cm}\mbox{for all}~f\in\mathcal{D}.$$
\end{proof}
The following lemma, which will be used in section \ref{Lagrange interpolation}, gives a bijective map between $\mathcal{H}_{z_1}$ and $\mathcal{H}_{z_2}$ for $z_1\neq z_2$. 
\begin{lemma}
\label{bijection}
Suppose $\mathcal{H}$ is a nonzero RKHS of $\mathfrak{X}$-valued entire functions, and $K_\gamma(z)$ is the corresponding RK. If $R_{z_1}\mathcal{H}_{z_1}\subseteq\mathcal{H}$ and $R_{z_2}\mathcal{H}_{z_2}\subseteq\mathcal{H}$ for $z_1\neq z_2$ then $(\mathfrak{T}-z_2 I)R_{z_1}:\mathcal{H}_{z_1}\to\mathcal{H}_{z_2}$ is a bijective map.
\end{lemma}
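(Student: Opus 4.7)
The plan is to exploit Lemma \ref{Regular} in full: either hypothesis $R_{z_i}\mathcal{H}_{z_i}\subseteq\mathcal{H}$ upgrades to $R_{z_i}\mathcal{H}_{z_i}=\mathcal{D}$, and supplies both that $z_i$ is a point of regular type of $\mathfrak{T}$ and the identity $R_{z_i}(\mathfrak{T}-z_i I)f=f$ for $f\in\mathcal{D}$. With these in hand, the claim follows from three short steps: checking that the image lies in $\mathcal{H}_{z_2}$, injectivity, and surjectivity.

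For the image, I would write out the action explicitly. If $f\in\mathcal{H}_{z_1}$, then $f(z_1)=0$, so $(R_{z_1}f)(w)=\frac{f(w)}{w-z_1}$, and hence
$$((\mathfrak{T}-z_2 I)R_{z_1}f)(w) = \frac{(w-z_2)\,f(w)}{w-z_1},$$
which manifestly vanishes at $w=z_2$. For injectivity, if $(\mathfrak{T}-z_2 I)R_{z_1}f=0$, then since by Lemma \ref{Regular}(3) the number $z_2$ is of regular type for $\mathfrak{T}$, the operator $\mathfrak{T}-z_2 I$ is bounded below and in particular injective, forcing $R_{z_1}f=0$. The explicit formula above (or equally $R_{z_1}(\mathfrak{T}-z_1 I)=I$ on $\mathcal{D}$, which makes $R_{z_1}$ the two-sided inverse of $\mathfrak{T}-z_1 I\colon\mathcal{D}\to\mathcal{H}_{z_1}$) then gives $f=0$.

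For surjectivity, I would start from $g\in\mathcal{H}_{z_2}$ and use the hypothesis at $z_2$ to set $h:=R_{z_2}g\in\mathcal{D}$; the same computation as in the first step, with roles swapped, yields $(\mathfrak{T}-z_2 I)h = g$. Next, applying Lemma \ref{Regular} at $z_1$ gives $\mathcal{D}=R_{z_1}\mathcal{H}_{z_1}$, so $h=R_{z_1}f$ for some (unique) $f\in\mathcal{H}_{z_1}$. Consequently $(\mathfrak{T}-z_2 I)R_{z_1}f=(\mathfrak{T}-z_2 I)R_{z_2}g=g$.

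The main (and really only) obstacle is recognizing that the two hypotheses are used for different halves of the surjectivity: $R_{z_2}\mathcal{H}_{z_2}\subseteq\mathcal{H}$ produces the preimage of $g$ under $\mathfrak{T}-z_2 I$, while $R_{z_1}\mathcal{H}_{z_1}=\mathcal{D}$ allows this preimage to be pulled back through $R_{z_1}$ to an element of $\mathcal{H}_{z_1}$. The common description of $\mathcal{D}$ as the range of both $R_{z_1}$ and $R_{z_2}$, given by Lemma \ref{Regular}, is what glues the two hypotheses together.
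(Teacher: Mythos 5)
Your proposal is correct and is essentially the paper's argument unpacked: the paper simply exhibits $(\mathfrak{T}-z_1 I)R_{z_2}$ as a two-sided inverse via the identities $(\mathfrak{T}-z_2I)R_{z_1}(\mathfrak{T}-z_1I)R_{z_2}=I_{\mathcal{H}_{z_2}}$ and $(\mathfrak{T}-z_1I)R_{z_2}(\mathfrak{T}-z_2I)R_{z_1}=I_{\mathcal{H}_{z_1}}$, and your surjectivity step (taking $f=(\mathfrak{T}-z_1I)R_{z_2}g$) and injectivity step are exactly these two identities spelled out. No gap; same route.
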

\begin{proof}
The following observation proves the lemma:
$$(\mathfrak{T}-z_2I)R_{z_1}(\mathfrak{T}-z_1I)R_{z_2}=I_{H_{z_2}}~\mbox{and}~(\mathfrak{T}-z_1I)R_{z_2}(\mathfrak{T}-z_2I)R_{z_1}=I_{H_{z_1}}.$$
\end{proof}
\section{Analyticity and Kramer Sampling property in $\mathcal{H}$}
\label{Kramer thm}
This section investigates the situations when $\mathcal{H}_F$ would be a RKHS of $\mathfrak{X}$-valued analytic functions. Also, we consider a sufficient condition for which every element of $\mathcal{H}_F$ can be represented as a Kramer sampling series.
\begin{thm}
Suppose $\mathcal{H}$ is the RKHS corresponding to the $B(\mathfrak{X})$-valued function $F$ on the domain $\Omega\subseteq \mathbb{C}$. Then the elements of $\mathcal{H}$ are $\mathfrak{X}$-valued analytic functions on $\Omega$ if and only if $F$ is analytic on $\Omega$.
\end{thm}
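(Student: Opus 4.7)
The plan is to identify the claim with the classical equivalence of weak, strong, and norm holomorphy for $B(\mathfrak{X})$-valued functions. By the construction in (\ref{1}), every element of $\mathcal{H}_F$ has the form $f_u(z) = F(z)u$ for some $u \in \mathfrak{X}$, so the condition ``all elements of $\mathcal{H}_F$ are $\mathfrak{X}$-valued analytic on $\Omega$'' is equivalent to the statement that $z \mapsto F(z)u$ is $\mathfrak{X}$-valued analytic on $\Omega$ for every $u \in \mathfrak{X}$, i.e., $F$ is strongly analytic.

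For the forward direction ($F$ analytic $\Longrightarrow$ $\mathcal{H}_F$ analytic), I would fix $z_0 \in \Omega$ and use the norm-convergent expansion $F(z) = \sum_{n=0}^\infty A_n (z - z_0)^n$ on some disk about $z_0$, with $A_n \in B(\mathfrak{X})$. Applying both sides to $u \in \mathfrak{X}$ and using $\|A_n u\|_\mathfrak{X} \le \|A_n\|\,\|u\|_\mathfrak{X}$, the series $\sum_n (A_n u)(z - z_0)^n$ converges in $\mathfrak{X}$ on the same disk, which realizes $f_u$ as an $\mathfrak{X}$-valued analytic function at $z_0$. Since $z_0 \in \Omega$ is arbitrary, every $f_u \in \mathcal{H}_F$ is analytic on $\Omega$.

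The converse is where the actual work lives. Assuming strong analyticity, one pairs with $v \in \mathfrak{X}$ to obtain that $z \mapsto \langle F(z)u, v\rangle_\mathfrak{X}$ is a scalar-holomorphic function on $\Omega$ for every $u, v \in \mathfrak{X}$; in other words, $F$ is weakly holomorphic. The main step is then to invoke the classical result (essentially due to Dunford) that a weakly holomorphic $B(\mathfrak{X})$-valued function on an open set is automatically norm-holomorphic. Its proof produces, via the Banach--Steinhaus theorem applied to the strongly convergent family of difference quotients $(F(z+h) - F(z))/h$, a uniform operator-norm bound, and then Cauchy's integral formula together with the scalar Cauchy estimates upgrades weak convergence of the Taylor coefficients to norm convergence. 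This is the technical core, but it is a standard theorem I would cite rather than reprove.
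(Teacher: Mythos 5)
Your proposal is correct and is essentially the paper's own argument: the authors simply cite Theorem 1.2 in Chapter V of Taylor--Lay, which is precisely the classical equivalence of weak, strong, and norm holomorphy for operator valued functions that you invoke (your forward direction via the norm-convergent power series is the trivial half, and your converse via Dunford's theorem is the cited result). No gap.
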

\begin{proof}
The proof of this theorem follows from Theorem $1.2$ in chapter $V$ of \cite{Lay}.
\end{proof}
Now, suppose $\{u_n\}_{n=1}^\infty$ is an orthonormal basis of $\mathfrak{X}$. We consider a sequence of functions $\{F_n\}_{n=1}^\infty$ in $\mathcal{H}$ defined by $$F_n(z)=F(z)u_n\hspace{.3cm}\mbox{for all}~z\in\Omega.$$
The next theorem gives another criterion of $\mathcal{H}$ being a RKHS of $\mathfrak{X}$-valued analytic functions on $\Omega$ in terms of the analyticity of the sequence of functions $\{F_n\}_{n=1}^\infty$.
\begin{thm}
Suppose $\mathcal{H}$ is the RKHS corresponding to the $B(\mathfrak{X})$-valued function $F$ on the domain $\Omega\subseteq \mathbb{C}$. Then the elements of $\mathcal{H}$ are $\mathfrak{X}$-valued analytic functions on $\Omega$ if and only if the sequence of functions $\{F_n\}_{n=1}^\infty$ are analytic on $\Omega$ and $||F(.)||$ is bounded on every compact subset of $\Omega$.
\end{thm}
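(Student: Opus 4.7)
The plan is to piggy-back on the previous theorem in the excerpt, which equates analyticity of the elements of $\mathcal{H}$ with analyticity of the operator-valued function $F$. Reducing to that statement, the task becomes verifying that $F:\Omega\to B(\mathfrak{X})$ is analytic if and only if each $F_n(z)=F(z)u_n$ is $\mathfrak{X}$-valued analytic on $\Omega$ and $\|F(\cdot)\|$ is locally bounded on $\Omega$.

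For the forward direction, assuming $F$ is $B(\mathfrak{X})$-valued analytic on $\Omega$, I would invoke two standard facts. First, for each fixed $n$, the evaluation $T\mapsto Tu_n$ is a bounded linear map from $B(\mathfrak{X})$ to $\mathfrak{X}$, so $F_n=F(\cdot)u_n$ is $\mathfrak{X}$-valued analytic as the composition of the analytic $F$ with a bounded linear map. Second, norm analyticity of $F$ makes $\|F(\cdot)\|$ continuous on $\Omega$, and hence bounded on every compact subset.

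For the reverse direction, assume each $F_n$ is $\mathfrak{X}$-valued analytic on $\Omega$ and $\|F(\cdot)\|$ is locally bounded. Given $u\in\mathfrak{X}$, expand it in the orthonormal basis as $u=\sum_{n=1}^\infty\alpha_n u_n$ with $\alpha_n=\langle u,u_n\rangle_\mathfrak{X}$. Since $F(z)\in B(\mathfrak{X})$, one has $F(z)u=\sum_{n=1}^\infty\alpha_n F_n(z)$ pointwise in $z$. The key step is to upgrade this to locally uniform convergence. Writing the partial sums as $S_N(z)=\sum_{n=1}^N\alpha_n F_n(z)=F(z)\bigl(\sum_{n=1}^N\alpha_n u_n\bigr)$, on any compact $K\subseteq\Omega$ with $M_K=\sup_{z\in K}\|F(z)\|<\infty$ we obtain
$$\bigl\|F(z)u-S_N(z)\bigr\|_\mathfrak{X}\leq M_K\Bigl\|u-\sum_{n=1}^N\alpha_n u_n\Bigr\|_\mathfrak{X},$$
whose right-hand side is independent of $z$ and tends to $0$ as $N\to\infty$. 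Hence $F(\cdot)u$ is a locally uniform limit of analytic $\mathfrak{X}$-valued functions (each $S_N$ being a finite linear combination of the analytic $F_n$'s), and therefore analytic. Since this holds for every $u\in\mathfrak{X}$, every element $f_u\in\mathcal{H}$ is analytic.

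The main obstacle is conceptual rather than computational: one must see that local boundedness of $\|F(\cdot)\|$ is exactly the ingredient that bridges pointwise control via the basis expansion to the uniform-on-compacts control required by Weierstrass-type theorems for vector-valued analytic functions. Without this hypothesis, the identity $F(z)u=\sum\alpha_n F_n(z)$ cannot be promoted to a locally uniform limit, and analyticity of $F(\cdot)u$ does not follow from analyticity of the individual $F_n$.
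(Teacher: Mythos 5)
Your proof is correct, and its overall structure matches the paper's: both directions hinge on expanding $u=\sum_n\langle u,u_n\rangle_\mathfrak{X}u_n$ and writing $f_u(z)=\sum_n\langle u,u_n\rangle_\mathfrak{X}F_n(z)$, with local boundedness of $\|F(\cdot)\|$ supplying the control on the partial sums. The one genuine difference is in the final convergence step: the paper estimates the partial sums themselves, obtaining $\|\sum_{n=1}^{p}\langle u,u_n\rangle_\mathfrak{X}F_n(z)\|_\mathfrak{X}\leq\|F(z)\|\,\|u\|_\mathfrak{X}$, and then concludes analyticity of the limit from the fact that the sequence is analytic, pointwise convergent, and uniformly bounded on compacta (a Vitali--Montel-type argument, left implicit). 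You instead estimate the tail, $\|F(z)u-S_N(z)\|_\mathfrak{X}\leq M_K\|u-\sum_{n=1}^{N}\langle u,u_n\rangle_\mathfrak{X}u_n\|_\mathfrak{X}$, which yields uniform convergence on compacta directly and lets you invoke only the Weierstrass theorem for vector-valued analytic functions. Your version is slightly sharper and more self-contained, since it does not need the normal-families input that the paper's ``hence'' quietly relies on; the paper's version needs one fewer line of estimation. Your forward direction, routed through the preceding theorem (analyticity of the elements of $\mathcal{H}$ is equivalent to analyticity of $F$, whence norm continuity and local boundedness of $\|F(\cdot)\|$), is a reasonable way to make precise what the paper dismisses as evident.
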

\begin{proof}
When $\mathcal{H}$ is a RKHS of $\mathfrak{X}$-valued analytic functions on $\Omega$, it is evident that the $F_n$'s are analytic functions and $||F(.)||$ is bounded on every compact subset of $\Omega$. Now, we prove the converse part. For any $u\in\mathfrak{X}$, $u=\sum_{n=1}^\infty \langle u,u_n\rangle_\mathfrak{X}u_n$. Then
\begin{align}
f_u(z)=F(z)u &= F(z)\sum_{n=1}^\infty \langle u,u_n\rangle_\mathfrak{X}u_n\nonumber\\
&=\sum_{n=1}^\infty \langle u,u_n\rangle_\mathfrak{X} F(z)u_n
= \sum_{n=1}^\infty \langle u,u_n\rangle_\mathfrak{X} F_n(z).\label{Analyticity}
\end{align} 
Now, for any $p\in\mathbb{N}$, we have
\begin{align*}
||\sum_{n=1}^p \langle u,u_n\rangle_\mathfrak{X} F_n(z)||_\mathfrak{X}^2&= ||F(z)\sum_{n=1}^p \langle u,u_n\rangle_\mathfrak{X}u_n||_\mathfrak{X}^2\\
&\leq ||F(z)||^2||\sum_{n=1}^p \langle u,u_n\rangle_\mathfrak{X}u_n||_\mathfrak{X}^2\\
&=||F(z)||^2\sum_{n=1}^p|\langle u,u_n\rangle_\mathfrak{X}|^2
\leq||F(z)||^2||u||_\mathfrak{X}^2.
\end{align*}
This implies the partial sums of the series in (\ref{Analyticity}) are analytic and bounded on every compact subset of $\Omega$. Hence, the elements of $\mathcal{H}$ are $\mathfrak{X}$-valued analytic functions on $\Omega$.
\end{proof}
In the remaining portion of this section, we discuss the Kramer sampling series of elements in $\mathcal{H}$. We assume that there exists a sequence $\{z_n\}_{n=1}^\infty\subseteq \Omega$ and nonzero numbers $\{c_n\}_{n=1}^\infty$ such that for all $u\in\mathfrak{X}$ the following relation holds:
\begin{equation}
F(z_n)u=c_n\langle u,u_n\rangle_\mathfrak{X}u_n\hspace{.3cm}\mbox{for all}~n\in\mathbb{N}.\label{Sampling Condition}
\end{equation}
Observe that the sequence of functions $\{F_n\}_{n=1}^\infty$ satisfies the following interpolation property at $\{z_n\}_{n=1}^\infty$:
\begin{equation}
F_n(z_m)=c_n\delta_{n,m}u_n,
\end{equation}
which is also forcing the fact that $|z_n|\to\infty$ as $n\to\infty$. Thus, if $F:\Omega\subseteq \mathbb{C}\to B(\mathfrak{X})$ satisfies (\ref{Sampling Condition}), the domain $\Omega$ should be unbounded. Also, the next identity follows from $(\ref{Sampling Condition})$ will be used frequently:
\begin{equation}
\label{Kramer}
F(z_n)^*u_n=\overline{c_n}~u_n\hspace{.3cm}\mbox{for all}~n\in\mathbb{N}.
\end{equation}
 The subsequent theorem provides a sampling series representation of elements in $\mathcal{H}$.
\begin{thm}
Suppose $\mathcal{H}$ is the RKHS corresponding to the $B(\mathfrak{X})$-valued analytic function $F$ on the domain $\Omega\subseteq \mathbb{C}$ satisfying (\ref{Sampling Condition}). Then every element $f\in\mathcal{H}$ is completely determined by the values $\{f(z_n)\}_{n=1}^\infty$ and can be reconstructed by means of the following sampling series
\begin{equation}
\label{Sampling series}
f(z)=\sum_{n=1}^\infty \langle f(z_n),u_n\rangle_\mathfrak{X}~\frac{F_n(z)}{c_n}\hspace{.3cm}\mbox{for all}~z\in\Omega.
\end{equation}
\end{thm}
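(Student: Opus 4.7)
The plan is to take an arbitrary $f \in \mathcal{H}$, fix any representative $u \in \mathfrak{X}$ with $f = f_u = F(\cdot)u$, and combine two ingredients: the series expansion (\ref{Analyticity}) already derived, and the sampling condition (\ref{Sampling Condition}). The bridge between them is the identity $\langle f(z_n), u_n\rangle_\mathfrak{X}/c_n = \langle u, u_n\rangle_\mathfrak{X}$, which turns the orthonormal basis expansion of $u$ into a reconstruction in terms of the samples.

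First, I would write $u = \sum_n \langle u, u_n\rangle_\mathfrak{X} u_n$ (convergent in $\mathfrak{X}$) and apply the bounded operator $F(z)$ term-by-term to obtain, for each $z \in \Omega$,
$$f(z) = F(z)u = \sum_{n=1}^{\infty} \langle u, u_n\rangle_\mathfrak{X}\, F_n(z),$$
with convergence in $\mathfrak{X}$. This step is nothing more than (\ref{Analyticity}) and uses only continuity of $F(z) \in B(\mathfrak{X})$.

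Next, I would extract the coefficients $\langle u, u_n\rangle_\mathfrak{X}$ from the samples. By (\ref{Sampling Condition}), $f(z_n) = F(z_n)u = c_n \langle u, u_n\rangle_\mathfrak{X} u_n$; pairing with $u_n$ and dividing by the nonzero scalar $c_n$ yields $\langle u, u_n\rangle_\mathfrak{X} = \langle f(z_n), u_n\rangle_\mathfrak{X}/c_n$. Substituting this into the expansion above produces the claimed sampling series (\ref{Sampling series}).

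For the uniqueness assertion, I would suppose $f(z_n) = 0$ for every $n$; then for any representative $u$ of $f$ the equation $c_n \langle u, u_n\rangle_\mathfrak{X} u_n = 0$ forces $\langle u, u_n\rangle_\mathfrak{X} = 0$ for all $n$, and since $\{u_n\}$ is an orthonormal basis of $\mathfrak{X}$ we get $u = 0$, hence $f = 0$. Applied to the difference of two elements of $\mathcal{H}$ having the same samples, this gives the ``completely determined'' statement. The whole argument is essentially algebraic, so no serious obstacle is anticipated; the only mild point to keep in mind is that the representative $u$ of $f$ is not unique, but both sides of the reconstruction formula depend only on $f$ itself, so the particular choice is immaterial.
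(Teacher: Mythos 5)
Your proposal is correct and follows essentially the same route as the paper: both expand $u$ over the orthonormal basis $\{u_n\}_{n=1}^\infty$ and use (\ref{Sampling Condition}) to identify $\langle u,u_n\rangle_\mathfrak{X}=\langle f(z_n),u_n\rangle_\mathfrak{X}/c_n$. The only (harmless) difference is that the paper first notes that the sampling condition makes $L$ an isometry, so that $\{F_n\}_{n=1}^\infty$ is an orthonormal basis of $\mathcal{H}$ and the series converges in the $\mathcal{H}$-norm, whereas you apply $F(z)$ termwise and obtain pointwise convergence in $\mathfrak{X}$, which still suffices for the stated formula.
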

\begin{proof}
Due to the relation $(\ref{Sampling Condition})$, it is clear that $L$ is an isometry. Thus the family of functions $\{F_n\}_{n=1}^\infty$ is an orthonormal basis of $\mathcal{H}$. Now, any function $f=L(u)\in\mathcal{H}$ can be written as
$$f(z)=\sum_{n=1}^\infty \langle f,F_n\rangle_\mathcal{H}F_n(z).$$ 
In addition, we have
$$\langle f,F_n\rangle_\mathcal{H}=\langle u,u_n\rangle_\mathfrak{X}=\langle u,\frac{F(z_n)^*u_n}{\overline{c_n}}\rangle_\mathfrak{X}=\frac{\langle F(z_n)u,u_n\rangle_\mathfrak{X}}{c_n}=\frac{\langle f(z_n),u_n\rangle_\mathfrak{X}}{c_n}.$$
\end{proof}

\begin{rmk}
Observe that using $(\ref{Kramer})$, the series $(\ref{Sampling series})$ can be written as the Kramer sampling series:
$$f(z)=\sum_{n=1}^\infty\langle f,K_{z_n}u_n\rangle_\mathcal{H}\frac{K_{z_n}(z)u_n}{||K_{z_n}u_n||^2},\hspace{.4cm}\mbox{for all}~f\in\mathcal{H}.$$
Thus we call the identity in $(\ref{Sampling Condition})$ as the sampling condition and the family of functions $\{F_n\}_{n=1}^\infty$ as the sampling functions.
\end{rmk}

\section{Quasi Lagrange-type interpolation property in $\mathcal{H}$}
\label{Lagrange interpolation}
In this section, we will discuss the cases when the Kramer sampling series can be written as a Quasi Lagrange-type interpolation series. Also, in this direction, we consider a special case related to symmetric operators with compact resolvent. Suppose $\mathcal{H}$ is the RKHS corresponding to the $B(\mathfrak{X})$-valued entire function $F$ satisfying $(\ref{Sampling Condition})$. Then the sampling series $(\ref{Sampling series})$ for any $f\in\mathcal{H}$ is called quasi Lagrange-type interpolation series if it has the following representation:
\begin{equation}
\label{Lagrange series}
f(z)=\sum_{n=1}^\infty\langle f(z_n),u_n\rangle_\mathfrak{X}\frac{Q(z)}{(z-z_n)Q'(z_n)}\frac{A(z)}{\langle A(z_n),u_n\rangle_\mathfrak{X}},\hspace{.5cm}z\in\mathbb{C},
\end{equation}
where $Q$ is a scalar valued entire function having only simple zeros at $\{z_n\}_{n=1}^\infty$, and $A$ is an $\mathfrak{X}$-valued entire function such that $A(z)\neq 0$ for all $z\in\mathbb{C}$. The following theorem gives a necessary and sufficient condition for the Kramer sampling series to be represented as a quasi Lagrange-type interpolation series in terms of the invariance of $\mathcal{H}_z$ under the generalized backward shift operator $R_z$ for all $z\in\mathbb{C}$.
\begin{thm}
\label{Quasi Lagrange-type series thm}
Suppose $\mathcal{H}$ is the RKHS corresponding to the $B(\mathfrak{X})$-valued entire function $F$ satisfying $(\ref{Sampling Condition})$. Then the sampling formula $(\ref{Sampling series})$ for $\mathcal{H}$ can be written as the quasi Lagrange-type interpolation series $(\ref{Lagrange series})$ if and only if $R_z\mathcal{H}_z\subseteq\mathcal{H}$ for all $z\in\mathbb{C}$.
\end{thm}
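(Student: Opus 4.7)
The plan is to reinterpret quasi Lagrange-type representation as a factorization of the sampling basis and then to match this factorization against the backward-shift invariance through direct computation in the orthonormal basis $\{F_n\}$. Comparing $(\ref{Lagrange series})$ with $(\ref{Sampling series})$ shows that the quasi Lagrange series holds for every $f\in\mathcal{H}$ if and only if each sampling function factorizes as
$$F_n(z)=\frac{c_n\,Q(z)\,A(z)}{(z-z_n)\,Q'(z_n)\,\langle A(z_n),u_n\rangle_{\mathfrak{X}}}.$$
Combined with the previous theorem's expansion $f=\sum_n a_nF_n$ with $\ell^2$-coefficients $a_n=\langle f(z_n),u_n\rangle_{\mathfrak{X}}/c_n$, the whole theorem reduces to showing that this factorization of the basis is equivalent to $R_z\mathcal{H}_z\subseteq\mathcal{H}$ for every $z\in\mathbb{C}$.

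For the forward direction, fix $z\in\mathbb{C}$ and $f\in\mathcal{H}_z$. Substituting the factored form into $R_zf(w)=f(w)/(w-z)$ and invoking the partial-fraction identity $\frac{1}{(w-z_n)(w-z)}=\frac{1}{z_n-z}\bigl(\frac{1}{w-z_n}-\frac{1}{w-z}\bigr)$, one obtains
$$R_zf(w)=\sum_{n}\frac{a_n}{z_n-z}\,F_n(w)\;-\;\frac{Q(w)A(w)}{w-z}\,\sum_{n}\frac{a_nc_n}{(z_n-z)\,Q'(z_n)\,\langle A(z_n),u_n\rangle_{\mathfrak{X}}}.$$
When $z\notin\{z_n\}$ the scalar sum on the right is, up to sign, $f(z)$ divided by the nonzero quantities $Q(z)$ and $A(z)$, and hence vanishes by $f(z)=0$; when $z=z_{n_0}$ one has $a_{n_0}=0$, and a parallel calculation produces the same cancellation together with a single extra $F_{n_0}$-term whose coefficient is $\langle f'(z_{n_0}),u_{n_0}\rangle_{\mathfrak{X}}/c_{n_0}$. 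In every case the remaining series has coefficients $a_n/(z_n-z)$ (plus one finite correction), and these lie in $\ell^2$ because $|z_n|\to\infty$ keeps $|z_n-z|$ bounded below, so $R_zf\in\mathcal{H}$.

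For the converse, assume $R_z\mathcal{H}_z\subseteq\mathcal{H}$ for every $z\in\mathbb{C}$ and use Weierstrass factorization to fix a scalar entire $Q$ with simple zeros exactly at $\{z_n\}$. The preparatory step is to show that $F_1$ vanishes only at the points $\{z_m\}_{m\neq 1}$ and only to first order: if $F_1(z_0)=0$ for some $z_0\notin\{z_m\}_{m\neq 1}$, then $R_{z_0}F_1\in\mathcal{H}$, and comparing its sampling coefficients with those of $\frac{1}{z_1-z_0}F_1$ identifies the two functions pointwise, which forces $F_1(w)(z_1-w)\equiv 0$ and therefore $F_1\equiv 0$, a contradiction. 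With this one may define
$$A(z):=\frac{F_1(z)(z-z_1)Q'(z_1)}{c_1\,Q(z)},$$
an $\mathfrak{X}$-valued entire function that never vanishes, with $A(z_1)=u_1$. What remains is to verify the factorization formula for every $n\geq 2$: equivalently, that $F_n(z)$ is pointwise a scalar multiple of $A(z)$ with the scalar being the Lagrange interpolating kernel at $z_n$.

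The main obstacle is precisely this last step: converting the operator-theoretic hypothesis into the pointwise rank-one condition that the range of $F(z)$ is one-dimensional at every $z\in\mathbb{C}$. The sampling condition $(\ref{Sampling Condition})$ already gives $F(z_n)\mathfrak{X}=\mathbb{C}\,u_n$ at the sampling points. To propagate this to an arbitrary $z$, my plan is to use that for each $u\in\ker F(z)$ the element $f_u\in\mathcal{H}_z$ satisfies $R_zf_u\in\mathcal{H}$; coefficient-by-coefficient comparison in the ONB $\{F_n\}$ then determines a unique $v\in\mathfrak{X}$ with $\langle v,u_n\rangle_{\mathfrak{X}}=\langle u,u_n\rangle_{\mathfrak{X}}/(z_n-z)$ and the compatibility $F(w)\bigl[(w-z)v-u\bigr]=0$ for all $w$. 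Iterating this as $z$ varies and combining with the sampling-point rank-one structure should collapse the range of $F(z)$ to one dimension. Once that is secured, matching at each $z=z_n$ against the normalization $\langle A(z_n),u_n\rangle_{\mathfrak{X}}$ identifies the scalar multiplier of $A(z)$ as exactly the Lagrange interpolating kernel, completing the proof.
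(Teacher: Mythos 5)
Your direction ``quasi Lagrange $\Rightarrow$ shift-invariance'' is sound and essentially the paper's argument: the partial-fraction split plus $f(z)=0$ kills the second term, and the surviving coefficients $a_n/(z_n-z)$ are square-summable, so $R_zf$ is the image under $L$ of an explicit vector $v$ (the paper writes exactly these Fourier coefficients, deferring details to Theorem 3.3 of Everitt--Garc\'ia--Hern\'andez-Medina). Your first step of the other direction is also fine: comparing the sampling coefficients of $R_{z_0}F_1$ with those of $\frac{1}{z_1-z_0}F_1$ is just a repackaging of the paper's injectivity argument for $(\mathfrak{T}-z_1I)R_{z_0}$, and it correctly pins down the zero set of $F_1$ and lets you define a nonvanishing entire $A$ with $A(z_1)=u_1$.

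The genuine gap is the step you yourself flag as the ``main obstacle,'' namely showing $(z-z_n)F_n(z)=a_nQ(z)A(z)$ for $n\geq 2$. Your proposed route --- proving that $\mathrm{rng}\,F(z)$ is one-dimensional at every $z$ by ``iterating'' a compatibility relation $F(w)[(w-z)v-u]=0$ over varying $z$ --- is not carried out and does not obviously close: the relation you write only constrains vectors in $\ker F(z)$ for individual $z$, and no mechanism is given that forces the ranges at different points to line up along a single direction. The missing idea is much more direct and is exactly the tool you already used for $F_1$: for $n\neq 1$ you have $F_n(z_1)=0$, hence $R_{z_1}F_n\in\mathcal{H}$, and its sampling coefficients are $\langle F_n(z_p)/(z_p-z_1),u_p\rangle_{\mathfrak{X}}/c_p=\delta_{n,p}/(z_n-z_1)$ for $p\neq 1$ and $\langle F_n'(z_1),u_1\rangle_{\mathfrak{X}}/c_1$ for $p=1$. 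The sampling series therefore gives
\begin{equation*}
\frac{F_n(z)}{z-z_1}=\frac{F_n(z)}{z_n-z_1}+\frac{\langle F_n'(z_1),u_1\rangle_{\mathfrak{X}}}{c_1}\,F_1(z),
\end{equation*}
which rearranges to $(z-z_n)F_n(z)=a_n\,(z-z_1)F_1(z)$ with $a_n=\frac{(z_1-z_n)}{c_1}\langle F_n'(z_1),u_1\rangle_{\mathfrak{X}}$ (nonzero, else $F_n\equiv 0$). Since $(z-z_1)F_1(z)=\frac{c_1}{Q'(z_1)}Q(z)A(z)$ by your definition of $A$, this is precisely the required factorization, and the rank-one statement you were trying to prove falls out as a corollary rather than a prerequisite. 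This is the identity $(\mathfrak{T}-z_nI)R_{z_m}F_n=\langle(z_m-z_n)F_n'(z_m),u_m\rangle_{\mathfrak{X}}\,F_m/c_m$ on which the paper's proof turns; without it your argument does not reach the conclusion.
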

\begin{proof}
Let $R_z\mathcal{H}_z\subseteq\mathcal{H}$ for all $z\in\mathbb{C}$. We prove that $\{z_p\}_{p\neq n}$ are the only zeros of $F_n$ for every $n\in\mathbb{N}$, and these zeros of $F_n$ are all simple. Now, suppose for some $\beta\in\mathbb{C}$, $F_n(\beta)=0$, i.e., $F_n\in\mathcal{H}_\beta$, which implies $R_\beta F_n\in\mathcal{H}$. Thus $(\mathfrak{T}-z_n I)R_\beta F_n\in\mathcal{H}$ as
\begin{equation}
\label{L1}
(\mathfrak{T}-z_n I)(R_\beta F_n)(z)=\frac{z-z_n}{z-\beta}F_n(z)=F_n(z)+(\beta-z_n)(R_\beta F_n)(z),~z\in\mathbb{C}.
\end{equation}
If $\beta\neq z_p$ for all $p\in\mathbb{N}$, it is clear that $(\mathfrak{T}-z_n I)(R_\beta F_n)(z_p)=0$ for all $p\in\mathbb{N}$. Thus due to the sampling series $(\ref{Sampling series})$, we can conclude that $(\mathfrak{T}-z_n I)R_\beta F_n=0$ in $\mathcal{H}$. Since the operator $(\mathfrak{T}-z_n I)R_\beta$ is injective, we have $F_n=0$ in $\mathcal{H}$, which is a contradiction. Also, if any $z_p$, $p\neq n$ is a multiple zero of $F_n$, from $(\ref{L1})$, it is clear that $(\mathfrak{T}-z_n I)(R_\beta F_n)(z_p)=0$ for all $p\in\mathbb{N}$. Thus using the same argument as above, we can again arrive at the same contradiction situation.\\
Now, consider a scalar valued entire function $Q$ having only simple zeros at $z=z_n$ for all $n\in\mathbb{N}$. Then, due to the above reasoning, we conclude that there exists a $\mathfrak{X}$-valued entire function $A_n$ for all $n\in\mathbb{N}$ such that $A_n(z)\neq 0$ for all $z\in\mathbb{C}$ and 
$$(z-z_n)~F_n(z)=Q(z)~A_n(z),\hspace{.3cm}z\in\mathbb{C}.$$
Moreover, we find a universal $\mathfrak{X}$-valued entire function $A(z)$ such that $A(z)\neq 0$ for all $z\in\mathbb{C}$ and $A_n(z)=a_n~A(z)$ for all $z\in\mathbb{C}$, for all $n\in\mathbb{N}$ with $a_n\neq 0$. Since for $m\neq n$, the function $(\mathfrak{T}-z_nI)R_{z_m}F_n\in\mathcal{H}$ satisfies $(\mathfrak{T}-z_nI)(R_{z_m}F_n)(z)=0$ for all $z\in\{z_p\}_{p\neq m}$, by sampling series $(\ref{Sampling series})$, we have the following:
$$(\mathfrak{T}-z_nI)(R_{z_m}F_n)(z)=\frac{z-z_n}{z-z_m}F_n(z)=\langle (z_m-z_n)F_n'(z_m),u_m\rangle_\mathfrak{X}\frac{F_m(z)}{c_m},z\in\mathbb{C}.$$
Fixing $m=1$ and assuming $a_1=1$, $A(z)=A_1(z)$, we identify for every $n\geq 2$ that $A_n(z)=a_n A(z)$, where $A(z)=A_1(z)$ and $a_n=\frac{z_1-z_n}{c_1}\langle F_n'(z_1),u_1\rangle_\mathfrak{X}\neq 0$. Thus
$$
F_n(z) = \left\{
    \begin{array}{ll}
         \frac{a_nQ(z)A(z)}{z-z_n}  & \mbox{if } z \neq z_n \\
         a_nQ'(z_n)A(z_n) & \mbox{if } z = z_n.
    \end{array} \right.
$$
Also, since $F_n(z_n)=a_nQ'(z_n)A(z_n)=c_nu_n$, we have
$$c_n=a_nQ'(z_n)\langle A(z_n),u_n\rangle_\mathfrak{X}.$$
Hence, it is clear that by putting the values of $F_n(z)$ and $c_n$ in the sampling series $(\ref{Sampling series})$, one can get the required quasi Lagrange-type interpolation series $(\ref{Lagrange series})$.\\
Conversely, let the sampling formula $(\ref{Sampling series})$ for $\mathcal{H}$ can be written as a quasi Lagrange-type interpolation series $(\ref{Lagrange series})$. Suppose $f\in\mathcal{H}$ is such that $L(u)=f$ for some $u\in\mathfrak{X}$. We need to show that for any $\beta\in\mathbb{C}$ if $f\in\mathcal{H}_\beta$, i.e., $f(\beta)=0$, $R_\beta f\in\mathcal{H}$, i.e., $\frac{f(z)}{z-\beta}\in\mathcal{H}$. To be able to say that $R_\beta f\in\mathcal{H}$, it is sufficient to show that $R_\beta f$ can be written as a quasi Lagrange-type interpolation series and there exists a vector $v\in\mathfrak{X}$ such that $L(v)=R_\beta f$. The remaining proof is similar to the Theorem $3.3$ in \cite{Garcia2}. In this direction, we would only like to mention that when $\beta\not \in \{z_n\}_{n=1}^\infty$, then $\frac{f(z)}{\beta-z}=F(z)v$, where the Fourier coefficients of $v\in\mathfrak{X}$ are given by $$\langle v,u_n\rangle_\mathfrak{X}=\frac{1}{\beta-z_n}\langle u,u_n\rangle_\mathfrak{X}\hspace{.4cm}\mbox{for all}~n\in\mathbb{N}.$$
Similarly, when $\beta=z_m$ for some $m\in\mathbb{N}$, then $\frac{f(z)}{z-z_m}=F(z)w$, where the Fourier coefficients of $w\in\mathfrak{X}$ are given by
$$
\langle w,u_n\rangle_\mathfrak{X} = \left\{
    \begin{array}{ll}
         \frac{\langle u,u_n\rangle_\mathfrak{X}}{z_n-z_m}  & \mbox{if } n \neq m \\
         \frac{1}{c_m}\langle f'(z_m),u_m\rangle_\mathfrak{X} & \mbox{if } n = m.
    \end{array} \right.
$$
\end{proof}
In the rest of this section, we construct a RKHS $\mathcal{H}$ based on the resolvent operators of a symmetric operator with compact resolvent and discuss the quasi Lagrange-type interpolation series for elements in $\mathcal{H}$.
Let $T:\mathcal{D}(T)\subseteq \mathfrak{X}\to\mathfrak{X}$ is a densely defined symmetric operator such that $T^{-1}\in B(\mathfrak{X})$ and a compact operator. If $\{u_n^i\}_{i=1}^{k_n}$ are the eigenvectors of $T^{-1}$ corresponding to the eigenvalue $\xi_n$ and $z_n=\frac{1}{\xi_n}$, we recall the following basic informations:
\begin{enumerate}
\item The sequence $\{z_n\}$ is infinite and $|z_n|\to\infty$ as $n\to\infty$.
\item The orthonormal set $\{u_n^i:1\leq i\leq k_n\}_{n=1}^\infty$ is complete in $\mathfrak{X}$.
\item A number $z\in\sigma(T)$ if and only if $z\in\{z_n\}_{n=1}^\infty$ and $Tu_n^i=z_n u_n^i$.
\item For $z\not\in\sigma(T)$, the resolvent operator $R_z=(zI-T)^{-1}$ is compact and has the following form
\begin{equation}
\label{Resolvent}
R_zu=\sum_{n=1}^\infty\left[\frac{1}{z-z_n}\sum_{i=1}^{k_n}\langle u,u_n^i\rangle_\mathfrak{X}u_n^i\right]\hspace{.3cm}\mbox{for all}~u\in\mathfrak{X}.
\end{equation}
\end{enumerate}
For more details in this direction, we recommend \cite{Lay}. 
Suppose $Q(z)$ is a scalar valued entire function having only simple zeros at $z=z_n$ for all $n\in\mathbb{N}$. Then we consider the $B(\mathfrak{X})$-valued function $F(z)=Q(z)R_z$. At this point it is easy to observe that $F(z)$ is an entire function and 
\begin{equation}
\label{Point value}
 F(z_n)=Q'(z_n)\sum_{i=1}^{k_n}\langle \cdot,u_n^i\rangle_\mathfrak{X}u_n^i\hspace{.3cm}\mbox{for all}~n\in\mathbb{N}.
 \end{equation}
Thus $\cap_{n=1}^\infty\ker F(z_n)=\{0\}$, and due to Proposition \ref{2}, the operator $L$ is an isometry. We denote the corresponding RKHS $\mathcal{H}=\{F(z)u:u\in\mathfrak{X}\}$ with having reproducing kernel
$$K_\gamma(z)=Q(z)\overline{Q(\gamma)}R_zR_\gamma^*\hspace{.3cm}\mbox{for all}~\gamma,z\in\mathbb{C}.$$
Now, we want to discuss the sampling property of the elements in $\mathcal{H}$. We denote $F_n^i(z)=F(z)u_n^i$ for all $n\in\mathbb{N}$ and $1\leq i\leq k_n$. Observe that $F(z_n)^*u_n^i=\overline{Q'(z_n)}u_n^i$ holds for all $n\in\mathbb{N}$ and $1\leq i\leq k_n$. Thus every function of $\mathcal{H}$ can be recovered in terms of the sampling series like in Theorem \ref{Sampling series}. Now, to say that every function in $\mathcal{H}$ can be represented as a quasi Lagrange-type interpolation series, we only need to show that $R_z\mathcal{H}_z\subseteq\mathcal{H}$ for all $z\in\mathbb{C}$. Observe that $\mathcal{H}_{z_n}=\{0\}$ for all $n\in\mathbb{N}$. Now suppose $\beta\not\in\{z_n\}_{n=1}^\infty$ and $L(u)=f\in\mathcal{H}_\beta$ i.e., $f(\beta)=0$, which means
$$Q(\beta)\sum_{n=1}^\infty\left[\frac{1}{\beta-z_n}\sum_{i=1}^{k_n}\langle u,u_n^i\rangle_\mathfrak{X}u_n^i\right]=0.$$
Since $Q(\beta)\neq 0$, we have
\begin{align*}
f(z)&=Q(z)\sum_{n=1}^\infty\left[\frac{1}{z-z_n}\sum_{i=1}^{k_n}\langle u,u_n^i\rangle_\mathfrak{X}u_n^i\right]-Q(z)\sum_{n=1}^\infty\left[\frac{1}{\beta-z_n}\sum_{i=1}^{k_n}\langle u,u_n^i\rangle_\mathfrak{X}u_n^i\right]\\
&=(\beta-z)Q(z)\sum_{n=1}^\infty\left[\frac{1}{(z-z_n)(\beta-z_n)}\sum_{i=1}^{k_n}\langle u,u_n^i\rangle_\mathfrak{X}u_n^i\right].
\end{align*}
Thus for all $z\in\mathbb{C}$,
\begin{equation}
\frac{f(z)}{\beta-z}=Q(z)\sum_{n=1}^\infty\left[\frac{1}{(z-z_n)(\beta-z_n)}\sum_{i=1}^{k_n}\langle u,u_n^i\rangle_\mathfrak{X}u_n^i\right].
\end{equation}
Now, if we choose $v\in\mathfrak{X}$ such that the Fourier coefficients of $v$ are given by
$$\langle v,u_n^i\rangle_\mathfrak{X}=\frac{1}{\beta-z_n}\langle u,u_n^i\rangle_\mathfrak{X},\hspace{.4cm}\mbox{for all}~n\in\mathbb{N}~\mbox{and}~1\leq i\leq k_n,$$
then $\frac{f(z)}{\beta-z}=F(z)v\in\mathcal{H}$, which implies $R_z\mathcal{H}_z\subseteq \mathcal{H}$ for all $z\in\mathbb{C}$. Hence every function in $\mathcal{H}$ can be expressed as a quasi Lagrange-type interpolation series. However, in this situation, something more can be concluded. The functions of $\mathcal{H}$ can be recovered in terms of a Lagrange-type interpolation series.
\begin{thm}
Suppose $\mathcal{H}$ is a RKHS of $\mathfrak{X}$-valued entire functions corresponding to the $B(\mathfrak{X})$-valued entire function $F(z)=Q(z)R_z$. Then every element $f\in\mathcal{H}$ is completely determined by the values $\{f(z_n)\}_{n=1}^\infty$ and can be reconstructed in terms of the following Lagrange-type interpolation series
\begin{equation}
f(z)=\sum_{n=1}^\infty\frac{Q(z)}{(z-z_n)Q'(z_n)}f(z_n)\hspace{.3cm}\mbox{for all}~z\in\mathbb{C}.
\end{equation}
\end{thm}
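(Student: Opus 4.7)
The plan is to avoid re-running the machinery of quasi Lagrange-type series developed earlier and instead exploit the very explicit spectral description of $F(z)=Q(z)R_z$ to match both sides of the target identity directly. The idea is that the resolvent expansion (\ref{Resolvent}) makes the coefficients in the sampling series essentially visible, so that one can read off the Lagrange form from the formula for $f(z_n)$ supplied by (\ref{Point value}).

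Concretely, I would first write an arbitrary $f\in\mathcal{H}$ as $f(z)=F(z)u=Q(z)R_zu$ for some $u\in\mathfrak{X}$, and then apply the spectral formula (\ref{Resolvent}) to obtain
$$f(z)=Q(z)\sum_{n=1}^\infty\frac{1}{z-z_n}\sum_{i=1}^{k_n}\langle u,u_n^i\rangle_\mathfrak{X}u_n^i,\qquad z\notin\{z_n\}_{n=1}^\infty.$$
Next, evaluating at $z=z_n$ by means of (\ref{Point value}) gives
$$f(z_n)=Q'(z_n)\sum_{i=1}^{k_n}\langle u,u_n^i\rangle_\mathfrak{X}u_n^i,$$
which identifies the inner finite sum as $f(z_n)/Q'(z_n)$. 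Substituting this back into the preceding expansion converts it into
$$f(z)=\sum_{n=1}^\infty\frac{Q(z)}{(z-z_n)Q'(z_n)}f(z_n),$$
valid for $z\notin\{z_n\}_{n=1}^\infty$.

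It remains to justify the identity at the sampling points themselves. For $z=z_m$, every term with $n\ne m$ vanishes because $Q(z_m)=0$, while the $m$-th term is indeterminate but has the limit $f(z_m)$ via $\lim_{z\to z_m}Q(z)/(z-z_m)=Q'(z_m)$; since both sides are entire $\mathfrak{X}$-valued functions agreeing off $\{z_n\}_{n=1}^\infty$, they must coincide there as well. Convergence of the series in $\mathfrak{X}$ at fixed $z$ is inherited from the convergence of $R_zu$ in $\mathfrak{X}$, which is guaranteed by compactness of $R_z$ and the completeness of $\{u_n^i\}$. I do not foresee a substantive obstacle: the whole argument is essentially a one-line rearrangement of the spectral expansion of $u$, and the only delicate point is the correct interpretation of the $z=z_m$ limit, already handled above.
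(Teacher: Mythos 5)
Your proposal is correct and follows essentially the same route as the paper: both arguments hinge on expanding $f(z)=Q(z)R_zu$ via the resolvent formula (\ref{Resolvent}) and then using (\ref{Point value}) to identify the inner sum $\sum_{i=1}^{k_n}\langle u,u_n^i\rangle_\mathfrak{X}u_n^i$ with $f(z_n)/Q'(z_n)$. The paper phrases the first step as an orthonormal-basis expansion of $f$ in terms of $\{F_n^i\}$ in $\mathcal{H}$ rather than expanding $R_zu$ directly, and your extra care at the points $z=z_m$ (where $R_z$ is not defined) is a welcome but minor refinement.
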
 
\begin{proof}
Suppose $f\in\mathcal{H}$ is such that $f(z)=F(z)u$ for all $z\in\mathbb{C}$ and the unique $u\in\mathfrak{X}$. Since $\{u_n^i:1\leq i\leq k_n\}_{n=1}^\infty$ is an orthonormal basis of $\mathfrak{X}$, the family $\{F_n^i:1\leq i\leq k_n\}_{n=1}^\infty$ is an orthonormal basis of $\mathcal{H}$. Then for any $z\in\mathbb{C}$, we have
\begin{equation}
\label{3}
f(z) = \sum_{n=1}^\infty\sum_{i=1}^{k_n}\langle f, F_n^i\rangle_\mathcal{H}F_n^i(z) = \sum_{n=1}^\infty\sum_{i=1}^{k_n}\langle u,u_n^i\rangle_\mathfrak{X}F_n^i(z).
\end{equation}
From $(\ref{Resolvent})$ we deduce the following
\begin{equation}
\label{4}
F_n^i(z)=F(z)u_n^i=Q(z)R_zu_n^i=\frac{Q(z)}{z-z_n}u_n^i.
\end{equation}
Now, substituting $(\ref{Point value})$ and $(\ref{4})$ on $(\ref{3})$, we get the following required Lagrange-type interpolation series:
\begin{align*}
f(z)=\sum_{n=1}^\infty\sum_{i=1}^{k_n}\langle u,u_n^i\rangle_\mathfrak{X}F_n^i(z)&=\sum_{n=1}^\infty\frac{Q(z)}{z-z_n}\sum_{i=1}^{k_n}\langle u,u_n^i\rangle_\mathfrak{X}u_n^i\\
&=\sum_{n=1}^\infty\frac{Q(z)}{(z-z_n)Q'(z_n)}f(z_n).
\end{align*}
\end{proof}
The next lemma discusses some consequences of quasi Lagrange-type interpolation in $\mathcal{H}$ related to the multiplication operator $\mathfrak{T}$. 
\begin{lemma}
Let $\mathcal{H}$ be the RKHS corresponding to the $B(\mathfrak{X})$-valued entire function $F$ satisfying $(\ref{Sampling Condition})$. Suppose every element in $\mathcal{H}$ can be written as a quasi Lagrange-type interpolation series. Then the following implications hold:
\begin{enumerate}
\item $\mathfrak{T}$ is a closed operator.
\item $\mathfrak{T}$ is a regular operator.
\item $\mathfrak{T}$ is a symmetric operator.
\item $\cap_{z:\mbox{Im}z\neq 0}\mbox{rng}(\mathfrak{T}-zI)=\{0\}$,i.e., $\mathfrak{T}$ is simple.
\end{enumerate}
\end{lemma}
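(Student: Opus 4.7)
The strategy is to combine Theorem \ref{Quasi Lagrange-type series thm} with Lemma \ref{Regular}, applied pointwise in $z$. By Theorem \ref{Quasi Lagrange-type series thm}, the hypothesis that every element of $\mathcal{H}$ admits a quasi Lagrange-type interpolation series is equivalent to the invariance $R_z\mathcal{H}_z \subseteq \mathcal{H}$ for every $z \in \mathbb{C}$. Feeding this into Lemma \ref{Regular} for each $z$ yields, uniformly, the identities $\mathcal{D} = R_z\mathcal{H}_z$ and $\mbox{rng}(\mathfrak{T}-zI) = \mathcal{H}_z$, together with the fact that each such $z$ is a point of regular type for $\mathfrak{T}$.

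With this package in hand, items (1), (2), and (4) fall out almost mechanically. For (1), closedness of $\mathfrak{T}$ is the general fact about the multiplication operator on a RKHS of $\mathfrak{X}$-valued entire functions recorded in Section \ref{Prele} just before Lemma \ref{Regular}. For (2), regularity is by definition the statement that every $\gamma \in \mathbb{C}$ is a point of regular type, which is exactly what the pointwise use of Lemma \ref{Regular}(3) delivers. For (4), the range identity gives $\bigcap_{\mbox{Im}\,z \neq 0}\mbox{rng}(\mathfrak{T}-zI) = \bigcap_{\mbox{Im}\,z \neq 0}\mathcal{H}_z$; any $f$ in this intersection vanishes on the entire open set $\{z : \mbox{Im}\,z \neq 0\}$ and, being an entire $\mathfrak{X}$-valued function, vanishes identically by the identity theorem.

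Item (3), the symmetry of $\mathfrak{T}$, is where the real work lies. The plan is to diagonalize $\mathfrak{T}$ in the orthonormal basis $\{F_n\}_{n=1}^\infty$ of $\mathcal{H}$ produced by the sampling theorem of Section \ref{Kramer thm}. Using the Fourier-coefficient formula $\langle g, F_n\rangle_{\mathcal{H}} = c_n^{-1}\langle g(z_n), u_n\rangle_{\mathfrak{X}}$, applied to $g = \mathfrak{T}f = zf$ for $f \in \mathcal{D}$, one finds $\langle \mathfrak{T}f, F_n\rangle_{\mathcal{H}} = z_n \langle f, F_n\rangle_{\mathcal{H}}$. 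Hence $\mathfrak{T}$ acts as (the restriction of) a diagonal operator with entries $\{z_n\}$, and the symmetry identity $\langle \mathfrak{T}f, g\rangle = \langle f, \mathfrak{T}g\rangle$ on $\mathcal{D}$ reduces to the real-valuedness of each $z_n$. The main obstacle is then to extract this reality from the structural compatibility between the sampling condition $F(z_n)^* u_n = \overline{c_n} u_n$ and the shape of the quasi Lagrange-type interpolation formula.
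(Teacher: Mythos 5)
Your treatment of items (1), (2) and (4) is sound and essentially the paper's: invoke Theorem \ref{Quasi Lagrange-type series thm} to get $R_z\mathcal{H}_z\subseteq\mathcal{H}$ for all $z$, feed this into Lemma \ref{Regular} for regularity and for the range identity $\mbox{rng}(\mathfrak{T}-zI)=\mathcal{H}_z$, and finish (4) with the identity theorem for entire functions; for (1) the paper cites the closed graph theorem, while you cite the closedness statement recorded in Section \ref{Prele}, which amounts to the same thing.

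The gap is item (3), and it is not a small one: you correctly reduce symmetry of $\mathfrak{T}$ to the reality of the sampling points $z_n$ (the diagonalization $\langle \mathfrak{T}f,F_n\rangle_{\mathcal{H}}=z_n\langle f,F_n\rangle_{\mathcal{H}}$ is right), but you then stop, declaring the extraction of $z_n\in\mathbb{R}$ from the hypotheses to be ``the main obstacle.'' That obstacle is the entire content of (3), and in fact it cannot be overcome from the hypotheses of the lemma as literally stated: take $T$ diagonal on $\mathfrak{X}$ with $Tu_n=z_nu_n$ for non-real $z_n$ with $|z_n|\to\infty$, set $F(z)=Q(z)(zI-T)^{-1}$; the sampling condition (\ref{Sampling Condition}) holds with $c_n=Q'(z_n)$, the resolvent computation of Section \ref{Lagrange interpolation} gives $R_z\mathcal{H}_z\subseteq\mathcal{H}$ and hence quasi Lagrange-type interpolation, yet $\mathfrak{T}$ is unitarily equivalent to $T$ and is not symmetric. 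The paper's route for (3) is different from yours: it appeals to Lemma $7.4$ of \cite{Mahapatra} together with the isometry computation in the proof of Theorem \ref{de Branges connection}, which rests on $\bigl|\tfrac{z_n-\overline{\beta}}{z_n-\beta}\bigr|=1$, i.e., precisely on $z_n\in\mathbb{R}$ --- an assumption imported from Section \ref{de Branges spaces}, where the set $M=\{z_1,z_2,\dots\}$ of sampling points is taken to lie in $\mathbb{R}$. So your reduction is the right first step, but to close the argument you must either add the hypothesis $\{z_n\}\subseteq\mathbb{R}$ explicitly or follow the paper into the de Branges framework where it is assumed.
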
 
\begin{proof}
Since every element in $\mathcal{H}$ can be written as a quasi Lagrange-type interpolation series $R_z\mathcal{H}_z\subseteq\mathcal{H}$ for all $z\in\mathbb{C}$. Now, $(1)$ can be proved using the closed graph theorem, and $(2)$ follows from Lemma \ref{Regular}. Using Lemma $7.4$, the proof of $(3)$ can be realized from the proof of Theorem \ref{de Branges connection}. Now, since $\mbox{rng}(\mathfrak{T}-zI)=\mathcal{H}_z$ for all $z\in\mathbb{C}$, if $f\in\cap_{z:\mbox{Im}z\neq 0}\mbox{rng}(\mathfrak{T}-zI)$, we have $f(z)=0$ for all $z\in\mathbb{C}\setminus\mathbb{R}$. Since $f$ is an entire function $f=0$ in $\mathcal{H}$. This completes the proof.
\end{proof}

\section{Connection with the vector valued de Branges spaces}
\label{de Branges spaces}
In this section, we recall vector valued de Branges spaces that we have introduced in \cite{Mahapatra} and discuss when functions of these spaces can be represented as a quasi Lagrange-type interpolation series. Here, we tactfully choose the de Branges operators so that the corresponding de Branges spaces can be connected in this direction. The subtle change of the de Branges operators can be understood instantly, while all the other important results and notations will be unchanged. It is also worth noting that the theory of scalar valued de Branges spaces can be found in \cite{Branges 4}. Let $M=\{z_1,z_2,\ldots\}\subseteq \mathbb{R}$ be such that $|z_n|\to\infty$ as $n\to\infty$. Suppose $E_+, E_-:\mathbb{C}\to B(\mathfrak{X})$ be two entire functions such that $E_+(z), E_-(z)\in\Phi(\mathfrak{X})$ for all $z\in\Omega=\mathbb{C}\setminus M$. Also,
\begin{enumerate}
\item $E_+$ and $E_-$ both are invertible for atleast at one point in $\Omega$, and 
\item $\chi:=E_+^{-1}E_-\in\mathcal{S}^{in}\cap\mathcal{S}_*^{in}$.
\end{enumerate}
The de Branges operator is the pair of $B(\mathfrak{X})$-valued entire functions 
$$\mathfrak{E}(z)=(E_-(z),E_+(z)),z\in\mathbb{C}$$ and the corresponding positive kernel of the de Branges space $\mathcal{B}(\mathfrak{E})$ on $\mathbb{C}\times\mathbb{C}$ is given by
\begin{equation}
K_\gamma^{\mathfrak{E}}(z):= \left\{
    \begin{array}{ll}
         \frac{E_+(z)E_+(\gamma)^*-E_-(z)E_-(\gamma)^*}{\rho_\gamma(z)}  & \mbox{if } z \neq \overline{\gamma} \\
         \frac{E_+^{'} (\overline{\gamma})E_+(\gamma)^*- E_-^{'}(\overline{\gamma})E_-(\gamma)^*}{-2\pi i} & \mbox{if } z = \overline{\gamma}.
    \end{array} \right.\label{de Branges kernel}
\end{equation}
We denote the space $\mathcal{B}(\mathfrak{E})$ as $\mathcal{B}_\beta(\mathfrak{E})$ if for some $\beta\in\mathbb{C}_+$, $E_+(\beta)$ and $E_-(\overline{\beta})$ both are self adjoint. The following theorem gives a characterization of the space $\mathcal{B}_\beta(\mathfrak{E})$, and its proof is similar to the proof of Theorem $8.2$ in \cite{Mahapatra}.
\begin{thm}
Let $\mathcal{H}$ be a RKHS of $\mathfrak{X}$-valued entire functions with $B(\mathfrak{X})$-valued RK $K_\gamma(z)$ and suppose $\beta\in\mathbb{C}_+$ be such that
$$K_\beta(z),K_{\overline{\beta}}(z)\in\Phi(\mathfrak{X})\hspace{.4cm}\mbox{for all}~z\in\Omega=\mathbb{C}\setminus M$$
and
$$K_\beta(\beta),~K_{\overline{\beta}}(\overline{\beta})\hspace{.4cm}\mbox{both are invertible}.$$
Then the RKHS $\mathcal{H}$ is the same as the de Branges space $\mathcal{B}_\beta(\mathfrak{E})$ if and only if
\begin{enumerate}
\item $R_\beta\mathcal{H}_\beta\subseteq \mathcal{H}$, $R_{\overline{\beta}}\mathcal{H}_{\overline{\beta}}\subseteq \mathcal{H}$, and
\item $(\mathfrak{T}-\overline{\beta}I)R_\beta:\mathcal{H}_\beta\to\mathcal{H}_{\overline{\beta}}$ is an isometric isomorphism.
\end{enumerate}
\end{thm}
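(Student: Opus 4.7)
The plan is to adapt the argument of Theorem $8.2$ in \cite{Mahapatra}, with the modification that here both $E_+(\beta)$ and $E_-(\overline{\beta})$ are required to be self-adjoint (hence the subscript $\beta$ on $\mathcal{B}_\beta(\mathfrak{E})$).

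For the forward direction, assume $\mathcal{H} = \mathcal{B}_\beta(\mathfrak{E})$. The invariance $R_\beta \mathcal{H}_\beta \subseteq \mathcal{H}$ is a structural consequence of the de Branges kernel formula $(\ref{de Branges kernel})$: if $f \in \mathcal{H}$ with $f(\beta) = 0$, then substituting into the reproducing relation and exploiting the factor $(z - \overline{\gamma})$ hidden in $\rho_\gamma$ shows that $R_\beta f$ pairs correctly with every $K^{\mathfrak{E}}_\gamma v$, hence lies in $\mathcal{H}$. The same argument applies at $\overline{\beta}$. The isometric isomorphism $(\mathfrak{T}-\overline{\beta}I)R_\beta:\mathcal{H}_\beta \to \mathcal{H}_{\overline{\beta}}$ follows from the standard de Branges identity proved by direct computation using $(\ref{de Branges kernel})$, bijectivity then being automatic by Lemma \ref{bijection}. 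The self-adjointness of $E_+(\beta)$ and $E_-(\overline{\beta})$ is the extra ingredient that makes the cross terms in the inner-product expansion cancel and forces the map to be isometric rather than merely bounded.

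For the reverse direction, I would reconstruct $E_+$ and $E_-$ from $K_\beta$ and $K_{\overline{\beta}}$ by the recipe
$$E_-(z) \propto K_\beta(z)\, K_\beta(\beta)^{-1/2}, \qquad E_+(z) \propto K_{\overline{\beta}}(z)\, K_{\overline{\beta}}(\overline{\beta})^{-1/2},$$
with scalar normalizations and sign conventions chosen so that $E_+(\beta)$ and $E_-(\overline{\beta})$ come out self-adjoint (possible because $K_\beta(\beta)$ and $K_{\overline{\beta}}(\overline{\beta})$ are positive and invertible by hypothesis). These are entire and Fredholm on $\Omega$ by assumption. The kernel identity $(\ref{de Branges kernel})$ is verified first at $\gamma \in \{\beta,\overline{\beta}\}$ by direct substitution of the definitions, and then extended to all $\gamma$ by a propagation argument: writing the difference between $K_\gamma$ and the candidate right-hand side of $(\ref{de Branges kernel})$ divided by $\rho_\gamma$, one checks that this difference lies in both $\mathcal{H}_\beta$ and $\mathcal{H}_{\overline{\beta}}$, then invokes the isometry $(\mathfrak{T}-\overline{\beta}I)R_\beta$ to force it to vanish.

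The main obstacle is the final verification that $\chi := E_+^{-1}E_- \in \mathcal{S}^{in}\cap \mathcal{S}_*^{in}$, i.e., is two-sided inner. Pointwise contractivity of $\chi$ on $\mathbb{C}_+$ follows from the positivity of the reproducing kernel reinterpreted via the candidate identity $(\ref{de Branges kernel})$; the boundary inner property encodes the \emph{isometric} (as opposed to merely bounded) nature of $(\mathfrak{T}-\overline{\beta}I)R_\beta$. Matching these norm identities up with the factorization properties of inner functions is the delicate part, which essentially replays the analogous step in the proof of Theorem $8.2$ of \cite{Mahapatra}, with careful attention to the adjusted self-adjointness normalizations at $\beta$ and $\overline{\beta}$.
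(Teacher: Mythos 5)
Your overall plan coincides with what the paper actually does, which is essentially nothing: the paper's entire ``proof'' is the sentence that the argument is similar to Theorem $8.2$ of \cite{Mahapatra}, so at the level of strategy (forward direction from the structure of the kernel $(\ref{de Branges kernel})$, converse by reconstructing $E_\pm$ from the kernel sections at $\beta$ and $\overline{\beta}$ and then propagating the kernel identity to all $\gamma$) you are on the intended track. However, your explicit reconstruction recipe is wrong in two concrete ways. First, the factors $\rho_\beta(z)$ and $\rho_{\overline{\beta}}(z)$ are missing: the candidate structure functions must be
$$E_+(z)=\rho_\beta(\beta)^{-1/2}\,\rho_\beta(z)\,K_\beta(z)\,K_\beta(\beta)^{-1/2},\qquad E_-(z)=\rho_\beta(\beta)^{-1/2}\,\rho_{\overline{\beta}}(z)\,K_{\overline{\beta}}(z)\,K_{\overline{\beta}}(\overline{\beta})^{-1/2},$$
not scalar multiples of $K_\beta(z)K_\beta(\beta)^{-1/2}$ and $K_{\overline{\beta}}(z)K_{\overline{\beta}}(\overline{\beta})^{-1/2}$. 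The $\rho$ factors are what force $E_-(\beta)=0$ and $E_+(\overline{\beta})=0$, which is exactly what makes the kernel identity $(\ref{de Branges kernel})$ close up at $\gamma\in\{\beta,\overline{\beta}\}$ (one term drops out and the constant $\rho_\beta(\beta)^{-1}$ matches); without them the identity already fails in the scalar Paley--Wiener example, where $K_\beta(z)$ and $K_{\overline{\beta}}(z)$ carry denominators $(z-\overline{\beta})$ and $(z-\beta)$ that do not cancel against anything. Second, you have swapped the roles of $\beta$ and $\overline{\beta}$: attaching $E_-$ to $K_\beta$ and $E_+$ to $K_{\overline{\beta}}$ produces the wrong sign in $E_+E_+^*-E_-E_-^*$ (in the Paley--Wiener check it forces a negative value for $|c|^2$), and it also destroys your claimed normalization, since $E_+(\beta)$ would then be a scalar multiple of $K_{\overline{\beta}}(\beta)K_{\overline{\beta}}(\overline{\beta})^{-1/2}=F(\beta)F(\overline{\beta})^*K_{\overline{\beta}}(\overline{\beta})^{-1/2}$, which is not self-adjoint and cannot be made so by any scalar; with the correct assignment one gets $E_+(\beta)=\rho_\beta(\beta)^{1/2}K_\beta(\beta)^{1/2}\geq 0$ and $E_-(\overline{\beta})=-\rho_\beta(\beta)^{1/2}K_{\overline{\beta}}(\overline{\beta})^{1/2}\leq 0$, which is precisely the $\mathcal{B}_\beta(\mathfrak{E})$ normalization. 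The rest of your sketch (propagation of the kernel identity using the isometry of $(\mathfrak{T}-\overline{\beta}I)R_\beta$, and the verification that $\chi=E_+^{-1}E_-$ is two-sided inner) is stated at a level of generality where it cannot be checked, but those are indeed the delicate steps the cited Theorem $8.2$ is supposed to supply.
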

Now, if $\mathcal{H}$ is a RKHS of $\mathfrak{X}$-valued entire functions corresponding to the $B(\mathfrak{X})$-valued entire function $F$, satisfying $(\ref{Sampling Condition})$ and isometrically isometric to a de Branges space $\mathcal{B}(\mathfrak{E})$ corresponding to the de Branges operator $\mathfrak{E}(z)=(E_-(z),E_+(z))$, then $R_z\mathcal{H}_z\subseteq \mathcal{H}$ for all $z\in\mathbb{C}$ if $E_+(z)$, $E_-(z)$ both are invertible for all $z\in\mathbb{R}$. This result can be proved using techniques from Lemma $3.14$ in \cite{ArD18}, and detailed proof can be found in \cite{Mahapatra1} (Lemma $3.7$). Thus it follows from Theorem \ref{Quasi Lagrange-type series thm} that in this case, every function $f\in\mathcal{B}(\mathfrak{E})$ can be written as a quasi Lagrange-type interpolation series. The following theorem gives a converse to this result.
\begin{thm}
\label{de Branges connection}
Suppose $\mathcal{H}$ is the RKHS corresponding to the $B(\mathfrak{X})$-valued entire function $F$ satisfying $(\ref{Sampling Condition})$ and $F(z)\in\Phi(\mathfrak{X})$ for all $z\in\mathbb{C}\setminus M$. Also, there exists a $\beta\in\mathbb{C}_+$ such that $F(\beta)$ and $F(\overline{\beta})$ both are invertible. Then $\mathcal{H}$ is a de Branges space $B_\beta(\mathfrak{E})$ if the sampling series $(\ref{Sampling series})$ can be written as a quasi Lagrange-type interpolation series.
\end{thm}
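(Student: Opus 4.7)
The plan is to invoke the characterization theorem for $\mathcal{B}_\beta(\mathfrak{E})$ immediately preceding this statement. That theorem identifies $\mathcal{H}$ with the de Branges space $\mathcal{B}_\beta(\mathfrak{E})$ provided that the Fredholm/invertibility prerequisites on the reproducing kernel hold, together with two structural conditions: $R_\beta\mathcal{H}_\beta\subseteq\mathcal{H}$, $R_{\overline{\beta}}\mathcal{H}_{\overline{\beta}}\subseteq\mathcal{H}$, and $(\mathfrak{T}-\overline{\beta}I)R_\beta:\mathcal{H}_\beta\to\mathcal{H}_{\overline{\beta}}$ being an isometric isomorphism. My proof will verify each of these in turn.

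First, the prerequisites are immediate: since $K_\gamma(z)=F(z)F(\gamma)^*$ and the class $\Phi(\mathfrak{X})$ is closed under adjoints and composition, $F(z)\in\Phi(\mathfrak{X})$ for $z\in\mathbb{C}\setminus M$ yields $K_\beta(z), K_{\overline{\beta}}(z)\in\Phi(\mathfrak{X})$ there; and the invertibility of $F(\beta), F(\overline{\beta})$ furnishes the invertibility of $K_\beta(\beta)=F(\beta)F(\beta)^*$ and $K_{\overline{\beta}}(\overline{\beta})=F(\overline{\beta})F(\overline{\beta})^*$. Second, since the sampling series is representable as a quasi Lagrange-type series, Theorem \ref{Quasi Lagrange-type series thm} supplies $R_z\mathcal{H}_z\subseteq\mathcal{H}$ for every $z\in\mathbb{C}$, covering in particular $z=\beta$ and $z=\overline{\beta}$.

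The crux is the isometric isomorphism. Bijectivity of $(\mathfrak{T}-\overline{\beta}I)R_\beta$ from $\mathcal{H}_\beta$ onto $\mathcal{H}_{\overline{\beta}}$ comes from Lemma \ref{bijection}. For the isometry, given $f\in\mathcal{H}_\beta$, set $g=(\mathfrak{T}-\overline{\beta}I)R_\beta f\in\mathcal{H}_{\overline{\beta}}$. Since $f(\beta)=0$, one computes
$$g(z)=(z-\overline{\beta})\frac{f(z)}{z-\beta},\qquad g(z_n)=\frac{z_n-\overline{\beta}}{z_n-\beta}f(z_n),$$
and the reality $z_n\in M\subseteq\mathbb{R}$ gives $\overline{z_n-\beta}=z_n-\overline{\beta}$, so $|\lambda_n|=1$ where $\lambda_n:=(z_n-\overline{\beta})/(z_n-\beta)$. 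Because both $f$ and $g$ lie in $\mathcal{H}$ and admit the Kramer sampling expansion in the orthonormal basis $\{F_n\}_{n=1}^\infty$, Parseval's identity gives
$$\|g\|_\mathcal{H}^2=\sum_{n=1}^\infty\frac{|\lambda_n|^2|\langle f(z_n),u_n\rangle_\mathfrak{X}|^2}{|c_n|^2}=\sum_{n=1}^\infty\frac{|\langle f(z_n),u_n\rangle_\mathfrak{X}|^2}{|c_n|^2}=\|f\|_\mathcal{H}^2.$$

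The main obstacle is this isometry computation; the unimodularity of $\lambda_n$, and hence the isometry, rests on the reality of the sampling nodes, which is the assumption built into the de Branges setting via $M\subseteq\mathbb{R}$. Once the three conditions are in place, the identification $\mathcal{H}=\mathcal{B}_\beta(\mathfrak{E})$ follows directly from the characterization theorem.
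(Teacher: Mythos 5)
Your proposal is correct and follows essentially the same route as the paper: verify the Fredholm/invertibility prerequisites from $K_\gamma(z)=F(z)F(\gamma)^*$, obtain $R_\beta\mathcal{H}_\beta\subseteq\mathcal{H}$ and $R_{\overline\beta}\mathcal{H}_{\overline\beta}\subseteq\mathcal{H}$ from Theorem \ref{Quasi Lagrange-type series thm}, and prove the isometry of $(\mathfrak{T}-\overline{\beta}I)R_\beta$ via the unimodularity of $(z_n-\overline{\beta})/(z_n-\beta)$ for real $z_n$. The only cosmetic difference is that you run the norm computation through Parseval on the sampled values $\langle f(z_n),u_n\rangle/c_n$, while the paper expands $\|u+(\beta-\overline{\beta})v\|_\mathfrak{X}^2$ in the Fourier coefficients of the preimage vector; these coincide via the sampling condition $\langle f(z_n),u_n\rangle=c_n\langle u,u_n\rangle$.
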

\begin{proof}
We use the characterization of $\mathcal{B}_\beta(\mathfrak{E})$ to prove this theorem. Since the sampling series can be written as a quasi Lagrange-type interpolation series, $R_z\mathcal{H}_z\subseteq \mathcal{H}$ for all $z\in\mathbb{C}$. Thus in particular,
$$R_\beta\mathcal{H}_\beta\subseteq \mathcal{H}\hspace{.4cm}\mbox{and}\hspace{.4cm}R_{\overline{\beta}}\mathcal{H}_{\overline{\beta}}\subseteq \mathcal{H}.$$
Since $F(\beta)$ and $F(\overline{\beta})$ both are invertible $K_\beta(\beta)=F(\beta)F(\beta)^*$ and $K_{\overline{\beta}}(\overline{\beta})=F(\overline{\beta})F(\overline{\beta})^*$ both are invertible. Also, for any $z\in \mathbb{C}\setminus M$, $K_\beta(z)=F(z)F(\beta)^*$ and $K_{\overline{\beta}}(z)=F(z)F(\overline{\beta})^*$ both belong to $\Phi(\mathfrak{X})$. It only remains to show that $(\mathfrak{T}-\overline{\beta}I)R_\beta:\mathcal{H}_\beta\to\mathcal{H}_{\overline{\beta}}$ is an isometric isomorphism. It is clear that $(\mathfrak{T}-\overline{\beta}I)R_\beta:\mathcal{H}_\beta\to\mathcal{H}_{\overline{\beta}}$  is bijective. Now, let $f\in\mathcal{H}_\beta$, i.e., $f(\beta)=0$ is such that $f(z)=F(z)u$ for some $u\in\mathfrak{X}$. Then
$$(\mathfrak{T}-\overline{\beta}I)(R_\beta f)(z)=f(z)+(\beta-\overline{\beta})\frac{f(z)}{z-\beta},z\in\mathbb{C}.$$
We know that $R_\beta f\in\mathcal{H}$ and $(R_\beta f)(z)=F(z)v$ for all $z\in\mathbb{C}$ such that the Fourier coefficients of $v\in\mathfrak{X}$ are given by 
$$\langle v,u_n\rangle_\mathfrak{X}=\frac{1}{z_n-\beta}\langle u,u_n\rangle_\mathfrak{X},n\in\mathbb{N}.$$
Thus the following calculation completes the proof:
\begin{align*}
||(\mathfrak{T}-\overline{\beta}I)R_\beta f||_\mathcal{H}^2&=||f+(\beta-\overline{\beta})R_\beta f||_\mathcal{H}^2\\
&=||u+(\beta-\overline{\beta})v||_\mathfrak{X}^2\\
&=\sum_{n=1}^\infty |\langle u+(\beta-\overline{\beta})v,u_n\rangle_\mathfrak{X}|^2\\
&=\sum_{n=1}^\infty |\langle u,u_n\rangle_\mathfrak{X}+\frac{\beta-\overline{\beta}}{z_n-\beta}\langle u,u_n\rangle_\mathfrak{X}|^2\\
&=\sum_{n=1}^\infty |\frac{z_n-\overline{\beta}}{z_n-\beta}|^2|\langle u,u_n\rangle_\mathfrak{X}|^2\\
&=||u||_\mathfrak{X}^2=||f||_\mathcal{H}^2.
\end{align*}

\end{proof}


\noindent \textbf{Acknowledgements:} 
The research of the first author is supported by the University Grants Commission (UGC) fellowship (Ref. No. DEC18-424729), Govt. of India.
The research of the second author is supported by the MATRICS grant of SERB (MTR/2023/001324).

\noindent\textbf{Conflict of interest:}\\
The authors declare that they have no conflict of interest.


\noindent\textbf{Data availability:}\\
No data was used for the research described in this article.


\end{document}